\newtheorem{theorem}{Theorem}[section]
\newtheorem{lemma}[theorem]{Lemma}
\newtheorem{proposition}[theorem]{Proposition}
\theoremstyle{definition}
\newtheorem{example}[theorem]{Example}
\newtheorem{remark}[theorem]{Remark}
\newtheorem{definition}[theorem]{Definition}
\title[Compatible Relative Lefschetz Fibrations on Admissible Relative Stein Pairs]{Compatible Relative Lefschetz Fibrations on \\ Admissible Relative Stein Pairs}
\author{Yasemin Yildirim}
\address{Department of Mathematics, TED University, Ankara, TURKEY}
\email{yyildirim@tedu.edu.tr}
\author{M. Firat Arikan}
\address{Department of Mathematics, Middle East Technical University, Ankara, TURKEY}
\email{farikan@metu.edu.tr}
\subjclass[2020]{57R17, 57R65, 58A05}
\keywords{Stein domain, Lefschetz fibration, contact structure, open book, Legendrian, monodromy, handle decomposition}
\begin{document}
	
\begin{abstract}
For more than two decades it has been known that any compact Stein surface (of real dimension four) admits a compatible Lefschetz fibration over a two-disk. More recently, Giroux and Pardon have generalized this result by giving a complex geometric proof for the existence of compatible Lefschetz fibrations on Stein domains of any even dimension. As a preparatory step in proving the former, Akbulut and Ozbagci have shown that there exist infinitely many pairwise non-equivalent Lefschetz fibrations on the four-ball $D^4$ by using a result of Lyon constructing fibrations on the complements of $(p,q)$-torus links in the three-sphere. In this paper, we first extend this result to obtain compatible Lefschetz fibrations on $D^6$ whose pages are $(p,q,2)$-Brieskorn varieties, and then construct a compatible ``relative'' Lefschetz fibrations on any Stein domain (of dimension six) which admit a certain (``admissible'') ``relative Stein pair'' structure.  In particular, we provide a purely topological proof for the existence of Lefschetz fibrations on specific $6$-dimensional Stein domains.
\end{abstract}
	
\maketitle
	

\section{Introduction}
A \textit{Stein manifold } is a properly embedded complex submanifold  of some $ \mathbb{C}^N $. A properly embedded complex submanifold of a Stein manifold is called a \textit{Stein submanifold}. A Stein (sub)manifold is necessarily an open manifold and even dimensional. In this paper, we are interested in only the compact case for which the following equivalent characterization is given:\\
	 	
A \textit{Stein domain } $(W,J,\phi)$ is a compact complex manifold of real dimension $2n$ with boundary which admits a complex structure $J$ and a $J$-convex generalized Morse function $\phi:W\rightarrow \mathbb{R}$. A properly embedded complex submanifold $X \subset W$ equipped with the restricted structures $(J|_X,\phi|_X)$ is called a \textit{Stein subdomain}. Eliashberg has  shown that Stein domains carry handle decompositions up to the \emph{critical index} $n$ \cite{eli}.\\

A \textit{Weinstein domain} $(W,\omega,\phi)$ is a compact exact symplectic manifold $(W,\omega)$ that carries a Morse function $\phi:W\rightarrow \mathbb{R}$ such that the Liouville vector field of $\omega$ is gradient-like for $\phi$ and transversally pointing out from $\partial W$. A properly embedded submanifold $X \subset W$ equipped with the restricted structures $(\omega|_X,\phi|_X)$ is called a \textit{Weinstein subdomain}. Any Stein domain admits a Weinstein structure, and the converse has been proved by Cieliebak and Eliashberg \cite{ciel}.\\

A \textit{contact manifold} $(M^{2n-1},\xi)$ is a smooth manifold $M$ equipped with a  completely non-integrable hyperplane distribution $\xi$. A $k$-dimensional ($k\leq n-1$) submanifold $L$ of $(M^{2n-1},\xi)$ is \textit{isotropic} if $T_pL\subset \xi_p$ for all $p \in L$, and an $(n-1)$-dimensional isotropic submanifold is said to be \textit{Legendrian}. Stein (resp. Weinstein) domains carry contact structures on their boundaries induced by Stein (resp. Weinstein) structures, and the handles which construct Stein (resp. Weinstein)  domains are attached along isotropic spheres. In what follows, we will consider \textit{the standard contact sphere} ${S}^{2n-1}$ equipped with \textit{ the standard contact structure}
\begin{center}
$\xi^{2n-1}_{std}=\textrm{Ker}(\alpha^{2n-1}_{std}), \quad \alpha^{2n-1}_{std}=\displaystyle\sum_{i=0}^{n-1}x_idy_i-y_idx_i$
\end{center}
where ${S}^{2n-1}$ is considered to be the unit sphere in $\mathbb{R}^{2n}$ with coordinates $(x_0,y_0,...,x_{n-1},y_{n-1})$.

An \textit{open book} on a closed manifold $M$ is a pair $(B,f)$ where $B$ is a codimension $2$-submanifold of $M$ (called the \textit{binding}) and the map $f:M-B\rightarrow S^1$ is a fibration such that $\partial \overline{f^{-1}(\theta)} =B$ for all $\theta \in S^1$. The closure of the fiber $F= \overline{f^{-1}(\theta)}$ is called the \textit{page} over $\theta \in \mathbb{S}^1$, and the \textit{monodromy} of $(B,f)$ is the self-diffeomorphism $f:F\to F$ defined by the return map of the flow of the vector field $\partial/\partial \theta$. An open book $(B, f)$ and the contact structure $\xi$ on $M$ are \textit{compatible} if there exists a contact form $\alpha$ for $\xi$ such that $\alpha$ is a contact form on $B$, $d\alpha$ defines a symplectic structure on each page, and the orientation on $B$ induced from $\alpha$ agrees with the one induced from $d\alpha$ considering $B$ as the contact boundary of a symplectic page.\\

Ever since  Donaldson showed the existence of compatible Lefschetz fibrations on compact symplectic manifolds \cite{donald}, the theory of Lefschetz fibrations is well-treated in several different settings. Here we merely focus on those fibering over $D^2$ and whose total spaces are Weinstein domains. We refer the reader to \cite{donald}, \cite{duffs} and  \cite{sei} for more comprehensive definitions and facts about Lefschetz fibrations in different settings.\\
	 	
	Let $E$ be a compact $2n$-dimensional manifold with corners equipped with an exact symplectic form $\omega=d\lambda$ such that the both faces of the  boundary $\partial E=\partial_v E\cup \partial_h E$ are convex.
A \textit{compatible Lefschetz fibration} on a compact $2n$-dimensional manifold with corners $E$ is a holomorphic map $ \pi: E^{2n}\rightarrow D^2$ for some choice of compatible complex structure on $E$, which satisfies the following conditions:

\begin{enumerate}
	\item The map $\pi$ has finitely many nondegenerate distinct critical values $s_1,...,s_k \in int(D^2)$, and there exists an unique critical point $r_j \in  \pi^{-1}(s_j)$ for each $j=1,...,k$,
	and compatible complex structure on $E$ is integrable near the critical points,
	
	\item Near each critical point $r_j$ and the corresponding critical value $s_j$, there are local complex coordinate charts matching with the orientations of $E$ and  $D^2$ such that the map $\pi$ locally has the form $\pi(z_1,...,z_n)=z_1^2+...+z_n^2$,
	
	\item The restriction of the map $\pi$ to $E\backslash \pi^{-1}(\{s_1,...,s_k\})$ is a locally trivial fibration whose fibers are $(2n-2)$-dimensional exact symplectic manifolds with convex boundary for the restricted forms,
	
	\item The restriction of the map $\pi|_{\partial_v E}: \partial_v E\rightarrow \partial D^2$ to the vertical boundary $\partial_v E=\pi^{-1}(\partial D^2)$ is a surjective smooth fiber bundle. Moreover, there is a neighborhood of the horizontal boundary $\partial_h E=\bigcup\limits_{z\in D^2} \partial(\pi^{-1}(z))$ such that the restriction of $\pi$ to this neighborhood is a product fibration $D^2\times \mathcal{N}(\partial F)$ where $F$ is a regular fiber of $\pi$ and $ \mathcal{N}(\partial F)$ is a neighborhood of $\partial F$. The restricted $\omega$ and $\lambda$ to this neighborhood are sum of forms from the two factors. Also, $\pi|_{\partial_h E}: \partial_h E\rightarrow D^2$ are surjective fiber bundles.
\end{enumerate}

The corners of $E$ can be rounded to get a Liouville domain so that one can obtain an exact symplectic Lefschetz fibration on the Liouville domain over disk whose regular fibers are Liouville subdomains equipped with the restriction of $\lambda$. If $E$ has a Weinstein structure endowed with a Lefschetz fibration whose fibers are $(2n-2)$-dimensional Weinstein subdomains, then by rounding the corners of $E$ one can get a compatible Lefschetz fibration $\pi:W\rightarrow D^2$ on the Weinstein domain $W$ such that each regular fiber is a Weinstein subdomain with the restricted structure.

\textit{A vanishing path} is an embedded path $\gamma:[0,1]\rightarrow \mathbb{C}$ that starts in a regular value and ends in a critical value. \textit{A Lefschetz thimble}  $\Delta_\gamma$ can be associated for each vanishing path, which is the unique embedded Lagrangian ball $D^{n}$ that satisfies $\pi(\Delta_\gamma)=\gamma([0,1])$ and $\pi(\partial \Delta_\gamma)=\gamma(0)$. The boundary $L$ of the Lefschetz thimble is called the \textit{vanishing cycle} of the vanishing path $\gamma$. It is known that the monodromy about any critical value of a Lefschetz fibration is described by a Dehn twist $\tau_L$ along the corresponding vanishing cycle. Any compatible Lefschetz fibration on a Stein (or Weinstein) domain induces a compatible open book decomposition on the contact boundary. The fibers over $S^1=\partial D^2$ give the pages of the open book. The \textit{(global) monodromy} of a Lefschetz fibration is defined to be the monodromy of the boundary open book.\\
	 	
In \cite{girpar}, Giroux and Pardon constructed Lefschetz fibrations on Stein (and so Weinstein) domains by using complex geometric techniques and the close relationship between Stein and Weinstein structures given in \cite{ciel}. We give a topological proof of this result in dimension six for certain cases. To this end, we introduce the following relative structure.

\begin{definition} \label{def:Stein_Pair}	 	
A pair $(W^6,X^4)$ is a \textit{relative Stein pair} if $W$ is a Stein domain, $X$ is a Stein subdomain of $W$, and such that the induced pair $(W,X)$ of symplectic manifolds admits a Weinstein structure in which the handles are relative (\emph{relative handle decomposition}) in the following sense:

\begin{enumerate}
\item[(0)] The unique $0$-handle $D^4$ of $X$ is properly embedded in the unique  $0$-handle $D^6$ of $W$.\\
\item The (\emph{relative}) $1$-handles of $W$ and $X$ coincide in the following way:\\

\noindent They have the same number of $1$-handles and the cocore of each $1$-handle  of $X$ is properly embedded in the cocore of the corresponding $1$-handle of $W$.\\

\item Each (\emph{relative}) $2$-handle attachment of $W$ corresponds to a $2$-handle attachment of $X$ in the following way:\\
	 		
\noindent The attaching circle $K$ of each  $2$-handle of $W$ is an embedded Legendrian knot in the contact boundary  $\partial X_1$ so that the framing of the $2$-handle of $X$ is $tb(K) -1$ where $X_1 =D^{4}\cup 1$-handles. Also, the cocore of each $2$-handle of $ X $ is properly embedded in the cocore of the corresponding $2$-handle of $W$. \\

\item Each (\emph{relative}) $3$-handle attachment of $W$ corresponds to a $2$-handle attachment of $X$ in the following way:\\
	 		
\noindent An equator of the attaching sphere of each $3$-handle of $W$ is the attaching circle of the corresponding $2$-handle of $X$. This equator is properly embedded in $\partial X_2 \cap \partial W_2$ where $X_2 =X_1\cup 2$-handles, $W_2=W_1\cup 2$-handles, and $W_1=D^6\cup 1$-handles.
\end{enumerate}
\end{definition}
\vspace{.03cm}	 	
The main idea of this definition is the following: While we create the Stein domain $W$ handle by handle, the Stein domain $X$ is evolving as a subdomain of $W$ in the meantime. The Liouville vector field of $W$ is hence tangent to $X$, with all critical points contained in $X$, and the index of the critical point in $X$ being less than or equal to the index of the critical point in $W$. Moreover, the advantage of the conditions in Definition \ref{def:Stein_Pair} is that ``\emph{admissible Stein pairs}'' (see Definition \ref{def:admissible}), which form a subclass of relative Stein pairs, can be described by means of special diagrams which we call \textit{relative Stein diagrams}. Such diagrams are, indeed, relative versions of standard Stein diagrams of Stein surfaces introduced by Gompf in \cite{Gm}.

\begin{definition}
 A \textit{ relative Stein diagram} of a relative Stein pair $(W,X)$ \textit{in the standard form} (see Figure \ref{fig:Rel_Stein_diagram}) is a diagram defined in the following way:

$\bullet$ The (common) $r$ 1-handles of $W$ and $X$ are shown by $r$ pairs of horizontal balls.

$\bullet $ There is a collection of Legendrian (isotropic) horizontal distinguished dashed line segments coresponding to each pair of ball. A dashed line segment is drawn whenever the isotropic attaching circle of an index $2$-handle of $W$ crosses over the corresponding $1$-handle.

$\bullet $ There is a collection of Legendrian (isotropic) horizontal distinguished solid line segments coresponding to each pair of ball. A solid line segment is drawn whenever the isotropic equatorial circle of the Legendrian attaching sphere of an index $3$-handle of $W$ crosses over the corresponding $1$-handle.

$\bullet$ There is a front projection of a framed \textit{Legendrian (isotropic) tangle} (i.e., disjoint union of framed dashed and solid Legendrian (isotropic) knots and arcs) with endpoints meeting the distinguished dashed and solid segments so that they together form the isotropic attaching circles and the equatorial circles of the Legendrian attaching spheres of index $2$- and $3$-handles of $W$, respectively.  	
\end{definition}

\begin{figure}[h!]
	\centering
	\includegraphics{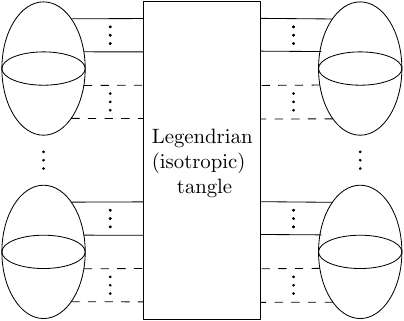}
	\caption{A relative Stein diagram of $(W,X)$ in the standard (Gompf) form.}
	\label{fig:Rel_Stein_diagram}
\end{figure}

\begin{remark}
It will be explained (at the beginning of Section \ref{proof}) that any admissible Stein pair $(W,X)$ can be represented by a relative Stein diagram where the framing of each dashed/solid Legendrian (isotropic) attaching knot is one less than its Thurston-Bennequin framing in the contact boundary of $X_1=D^4 \cup 1$-handles, and hence, a relative Stein diagram in the standard form of the pair $(W,X)$ is, indeed, a standard Stein diagram of the Stein surface $X$. Moreover, by representing $1$-handles by dotted circles (Akbulut convention), $(W,X)$ can be also described by a \textit{relative Stein diagram in dotted form} as depicted in Figure \ref{fig:Dotted_rel_Stein_diagram}.
\end{remark}

\begin{figure}[h!]
	\centering
   \includegraphics{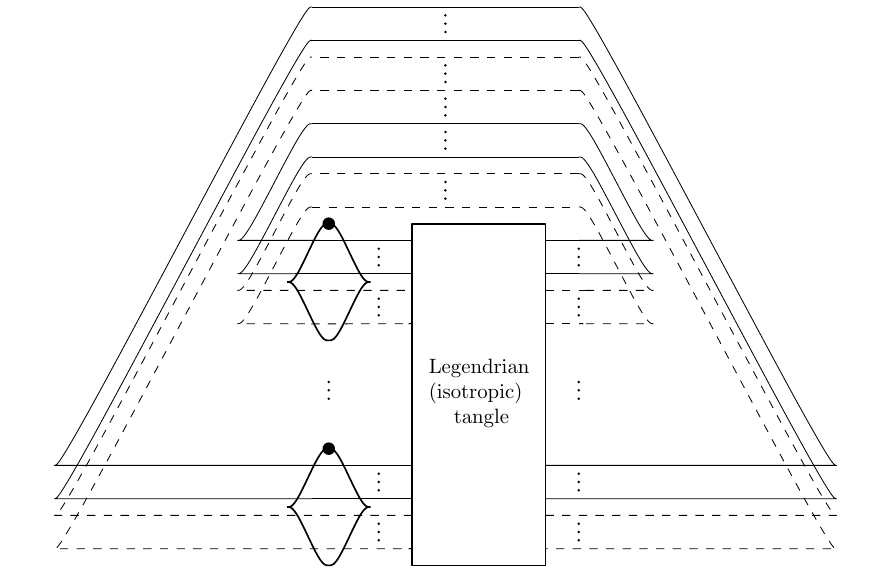}
	\caption{A relative Stein diagram in the dotted (Akbulut) form.}
	\label{fig:Dotted_rel_Stein_diagram}
\end{figure}	

\begin{remark} \label{rmk:Rel_surgery_diag}
In a relative Stein diagram (in dotted form) of $(W,X)$, by replacing dotted unknots with $(+1)$-framed unknots one obtains a ``\textit{relative contact surgery diagram}'' for the boundary \textit{relative contact pair} $(\partial W,\partial X)$ (also see \cite{as}). In particular, the empty relative Stein diagram describes the relative Stein pair $(D^6,D^4)$ and the corresponding empty relative contact surgery diagram describes the boundary relative (\textit{standard}) contact pair $(S^5,S^3)$ equipped with $(\xi^5_{std},\xi^3_{std})$. In the case when $W$ has no index $2$- and $3$-handles (i.e., when there are no dashed and solid knots and arcs), the relative Stein diagram (consisting of $r$ dotted unknots only) describes the relative Stein pair $(\natural_r S^1 \times D^5, \natural_r S^1 \times D^3)$ where $\natural$ denotes the boundary connected sum operation. In that case the corresponding relative contact surgery diagram (consisting of $r$  $(+1)$-framed unknots only) describes the relative contact pair $(\#_r S^1 \times S^4, \#_r S^1 \times S^2)$ equipped with their \textit{standard} (Stein fillable) \textit{contact structures} $(\eta^5_r,\eta^3_r)$.
\end{remark}

Next, we define a class of Legendrian $2$-links  (also see \cite{as}) below where we make use of the conventions $(\#_0 S^1 \times S^4, \#_0 S^1 \times S^2)=(S^5,S^3)$ and $(\eta^5_0,\eta^3_0)=(\xi^5_{std},\xi^3_{std})$.

\begin{definition}	 	
Let $r\geq 0$ be any fixed integer. A \textit{relative Legendrian $2$-link} $(L,K)$ in the relative contact pair $(\#_r S^1 \times S^4, \#_r S^1 \times S^2)$ equipped with $(\eta^5_r,\eta^3_r)$ is a link of Legendrian $2$-spheres $L$ in $(\#_r S^1 \times S^4, \eta^5_r)$ whose isotropic equatorial link $K$ is a Legendrian $1$-link in $(\#_r S^1 \times S^2, \eta^3_r)\subset (\#_r S^1 \times S^4, \eta^5_r)$.
\end{definition}

\begin{remark}
By drawing its Legendrian (isotropic) equatorial $1$-link $K$, any relative Legendrian $2$-link $(L,K)$ can be depicted in the relative contact surgery diagram of $(\#_r S^1 \times S^4, \#_r S^1 \times S^2)$ mentioned in Remark \ref{rmk:Rel_surgery_diag}. (Just introduce a suitably drawn (solid) knot to the diagram which realizes the front projection of each component of $K$.)
\end{remark}

In order to state our main result (see Theorem \ref{thm:Main_theorem} below), one also needs two other notions introduced below. First, for a given relative Stein pair $ (W^6,X^4) $, one can consider a Lefschetz fibration on $W$ consistent with the one on $X$ as follows:
	
\begin{definition} 	
A \textit{compatible relative Lefschetz fibration} on a relative Stein pair $(W^6,X^4)$ is a pair $(\pi_{W},\pi_{X})$ where $\pi_{W}:W\rightarrow D^2$, $ \pi_{X}:X\rightarrow D^2 $ are compatible Lefschetz fibrations  with regular fibers $F_W^4$ and $F_X^2$, respectively, such that
\begin{enumerate}
\item $F_{X}$ is a properly embedded subdomain of $F_{W}$,
\item The set of critical values of $ \pi_{W} $ is contained in the set of critical values of $\pi_{X}$.
\end{enumerate}
\end{definition}

The main interest of the present paper is the following subclass of (relative) Stein pairs:

\begin{definition}\label{def:admissible}
	A pair $(W,X)$ is  called  an \textit{admissible Stein pair} if one has
	\begin{center}
	$W=\widetilde{W}\cup (\sqcup_{j=1}^{\ell} H_j)$ \quad and \quad $X=\widetilde{X}\cup (\sqcup_{j=1}^{\ell} h_j)$
	\end{center}
	 where $\widetilde{W}$ is a subcritical Stein $6$-domain which splits as $\widetilde{W}=\widetilde{X}\times D^2$, and for each $j=1,...,\ell$ the pair $(H_j,h_j)$ is a relative $3$-handle attached along a \textit{suspendible} Legendrian $2$-link.
\end{definition}

\begin{remark}\label{rmk:admissible_relative}
\textit{Suspendibility} of $2$-links will be introduced in Definition \ref{def:suspendible_link} and Definition \ref{def:suspendible_link_general}. For an admissible Stein pair $(W,X)$, its subcritical part $ (\widetilde{W},\widetilde{X}) $ satisfies the conditions $ (0) $, $ (1) $ and $ (2) $ of Definition \ref{def:Stein_Pair} (see the proof of Proposition \ref{mainprop2} or Remark \ref{rmk:subcritical}). Also by definition of admissibility, the condition $ (3) $ of Definition \ref{def:Stein_Pair} is obviously satisfied. Hence, after attaching relative $ 3 $-handles to the relative Stein pair $ (\widetilde{W},\widetilde{X}) $ we obtain a new relative Stein pair $ (W,X) $. Therefore, admissible Stein pairs, indeed, form a special subclass of relative Stein pairs.
\end{remark}
	 	
Finally, we state our main result:
		
\begin{theorem} \label{thm:Main_theorem}
Every admissible Stein pair $(W^6,X^4)$ admits a compatible relative Lefschetz fibration $(\pi_{W},\pi_{X})$.
\end{theorem}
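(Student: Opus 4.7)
The overall strategy is to construct $(\pi_{W},\pi_{X})$ inductively, following the admissibility decomposition $W=\widetilde{W}\cup(\sqcup_{j=1}^{\ell}H_{j})$, $X=\widetilde{X}\cup(\sqcup_{j=1}^{\ell}h_{j})$: first I will produce a compatible relative Lefschetz fibration on the subcritical pair $(\widetilde{W},\widetilde{X})$, then extend it across the relative $3$-handle pairs $(H_{j},h_{j})$ one at a time by realizing each suspendible Legendrian $2$-link as a matched pair of vanishing cycles for $\pi_{W}$ and $\pi_{X}$.

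For the subcritical base, I will apply the Akbulut--Ozbagci construction of compatible Lefschetz fibrations on Stein surfaces to $\widetilde{X}$, obtaining $\pi_{\widetilde{X}}\colon\widetilde{X}\to D^{2}$ with regular fiber $F_{X}^{2}$ whose vanishing cycles encode the $2$-handles of $\widetilde{X}$. Then, exploiting the product structure $\widetilde{W}=\widetilde{X}\times D^{2}$ together with the dimension-six extension of Akbulut--Ozbagci established earlier in the paper (giving Lefschetz fibrations on $D^{6}$ with Brieskorn-type regular fibers), I will build $\pi_{\widetilde{W}}\colon\widetilde{W}\to D^{2}$ whose regular fiber $F_{W}^{4}$ is a Weinstein subdomain into which $F_{X}$ properly embeds. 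The critical values of $\pi_{\widetilde{W}}$ will be chosen to coincide with those of $\pi_{\widetilde{X}}$, and each $4$-dimensional Lefschetz thimble will be arranged so that it restricts to the corresponding $2$-dimensional thimble of $\pi_{\widetilde{X}}$; this secures the compatibility conditions (1) and (2) of a compatible relative Lefschetz fibration on the subcritical piece.

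The inductive step attaches a relative $3$-handle pair $(H_{j},h_{j})$ along a Legendrian $2$-sphere $L_{j}$ whose isotropic equator $K_{j}$ is the attaching Legendrian knot of $h_{j}$. The notion of suspendibility (Definitions \ref{def:suspendible_link} and \ref{def:suspendible_link_general}, to be formulated later in the paper) is designed precisely so that $L_{j}$ can be realized as the vanishing cycle of a new Lefschetz critical value $s_{j}$ for an extension of $\pi_{W}$, while $K_{j}$ is simultaneously realized as the vanishing cycle of $\pi_{X}$ at the same value $s_{j}$. The new Lefschetz thimbles are then exactly the cocore $3$-disk of $H_{j}$ and the cocore $2$-disk of $h_{j}$; these are properly nested by the relative handle data, so $F_{X}$ remains a properly embedded Weinstein subdomain of $F_{W}$, and condition (2) holds automatically because $\pi_{W}$ and $\pi_{X}$ share all critical values.

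The main obstacle is this last step: verifying that the purely contact-topological suspendibility condition on $(L_{j},K_{j})$ is strong enough to globally glue a new Lefschetz singular fiber into the existing fibration while preserving all four conditions of a compatible Lefschetz fibration. The most delicate points will be producing the standard local model $\pi(z_{1},\dots,z_{n})=z_{1}^{2}+\cdots+z_{n}^{2}$ near the new critical point and maintaining the product structure along the horizontal boundary. Once suspendibility is properly unpacked, however, the realization of $L_{j}$ as a vanishing cycle produces a canonical $A_{1}$-type singularity whose thimble equals $H_{j}$, and its restriction to the $4$-dimensional slice realizes the $A_{1}$ singularity whose thimble equals $h_{j}$, so the compatibility requirements are effectively built into the definition; the remaining routine verifications (smoothness of the fibration off the critical set, behavior along $\partial_{v}$ and $\partial_{h}$, and the Weinstein subfiber structure) then follow from the local models and the inductive hypothesis.
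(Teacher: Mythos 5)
Your proposal follows essentially the same route as the paper: build the compatible relative Lefschetz fibration on the subcritical pair $(\widetilde{W},\widetilde{X})=(\widetilde{X}\times D^2,\widetilde{X})$ by suspending the Akbulut--Ozbagci fibration on $\widetilde{X}$ via a $z_2^2$-term (this is Proposition \ref{mainprop2}), then attach each relative $3$-handle as a relative Lefschetz handle, which is legitimate precisely because suspendibility lets one isotope $(L,K)$ onto a relative page where contact framing equals page framing (Proposition \ref{prop3.1} and Lemma \ref{lem:contact_page_framing}). The one slip is that the new Lefschetz thimbles are the \emph{cores} of $H_j$ and $h_j$ (the Lagrangian disks bounded by the vanishing cycles $L_j$ and $K_j$), not their cocores.
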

	 	
In \cite{ao}, Akbulut and Ozbagci constructed compatible Lefschetz fibrations on Stein surfaces by using  torus knots and handlebody theory. In order to construct Lefschetz fibrations, they also used Eliashberg's characterization of compact Stein surfaces \cite{eli} and examined Stein surfaces case by case with respect to their handle decompositions. For completeness we recall their approach below:
	 	
First, consider the case when a given Stein surface $X$ has no $1$-handles. Consider a Legendrian link $L$ with $\ell$ components in the contact boundary $S^3=\partial D^4$ which is formed by the attaching circles of the Stein $2$-handles of $X$. Put $L$ into its square bridge position and apply Lyon's algorithm to realize each component $L_i$ of $L$ ($i=1,...,\ell$) on a Seifert surface of a $(p,q)$-torus link (a page of a compatible open book on $(S^3,\xi^3_{std})$). Then by construction the framing $tb(L_i)-1$ of each Stein $2$-handle coincides with the page framing $lk(L_i,L_i^+)-1$, and hence, each Stein $2$-handle of $X$ is indeed a Lefschetz $2$-handle.
For the general case, considering the Stein $1$-handles of $X$ as dotted circles,  insert them into the previous case and realize them on the Seifert surface constructed as well. Finally, observe that attaching a Stein $1$-handle is equivalent to removing a disk from each fiber (near boundary) and extending the monodromy with identity.
	 	
The main goal of this paper is to generalize (extend) the above techniques and ideas of Akbulut and Ozbagci to dimension $6$. In particular, we show that $D^6$ carries a compatible Lefschetz fibration with fibers $V_\epsilon(p,q,2)$ in Section \ref{bireskornfillings}. On the way, we also prove in Section \ref{lyon} that one can realize any relative Legendrian $2$-link in some Brieskorn variety $V_\epsilon(p,q,2)$ which can be considered as a higher dimensional version of Lyon's result.

	
\section{BRIESKORN VARIETIES AS SEIFERT FILLINGS} \label{bireskornfillings}
	 	
Higher dimensional analogues of torus knots are Brieskorn manifolds. The \textit{Brieskorn varieties} are affine varieties of the form
\begin{center}
$V_\epsilon(a_0,...,a_n)=\big\{(z_0,...,z_n)\in \mathbb{C}^{n+1}| \sum\limits_{j=0}^{n} z_j^{a_j}=\epsilon \big\}$.
\end{center}
The links of singular Brieskorn varieties at 0, that is, the sets of the form
\begin{center}
$\Sigma(a_0,...,a_n) =\big\{(z_0,...,z_n)\in \mathbb{C}^{n+1}\;|\; \sum\limits_{j=0}^{n} |z_j|^2=1 $\quad and \quad  $\sum\limits_{j=0}^{n} z_j^{a_j}=0\big\},$
\end{center}
are called \textit{Brieskorn manifolds}. Equivalently, Brieskorn manifolds $\Sigma(a_0,...,a_n)$  arise as the intersection $V_\epsilon(a_0,...,a_n)\cap S^{2n+1}$. On a Brieskorn manifold $\Sigma(a_0,...,a_n)$, an open book decomposition can be explicitly constructed as follows: Consider the map
\begin{center}
$\Psi: \Sigma(a_0,...,a_n) \rightarrow \mathbb{C}$,
\hspace*{1\baselineskip} $(z_0,...,z_n)\longmapsto z_n$.
\end{center}
Normalization of this map gives an open book with binding $B=\{z_n=0\}= \Sigma(a_0,...,a_{n-1})$ and page $V_\epsilon (a_0,...,a_{n-1})$.  (See \cite{kn} and \cite{kk} for more details.) In particular, on the Brieskorn manifold $\Sigma(p,q,2,1)$ (which is diffeomorphic to $S^5$), one can construct an open book by projecting onto the last coordinate. Thus, we obtain an open book of $S^5$ with binding  $\Sigma(p,q,2)$ and pages $V_\epsilon(p,q,2)$.
	 	
In $S^3$, every link bounds an orientable surface which is called a \textit{Seifert surface}. By definition, the binding of an open book decomposition of $S^3$ is a fibered link and each page is a Seifert surface of the link. By adapting this terminology to fibered submanifolds of $S^5$, the Brieskorn variety $V_\epsilon (a_0,...,a_{n-1})$ will be called a \textit{Seifert filling} of the Brieskorn manifold $\Sigma(a_0,...,a_{n-1})$. Note that $V_\epsilon(p,q,2)$ is a Seifert filling of $\Sigma(p,q,2)$.
	 	
In \cite{kk}, the monodromy of an open book which arises from a fibered Brieskorn manifold has been studied. In particular, the monodromy of the open book on $S^5=\Sigma(p,q,2,1)$ (discussed above) is the product of right-handed Dehn twists along certain $2$-spheres embedded in the Seifert filling $V_\epsilon(p,q,2)$. One can realize this open book as the boundary open book induced  from a Lefschetz fibration on $D^6$. More precisely, we have
	 	
\begin{proposition}\label{prop2.1}
For each $(p,q)$,  there exists a compatible Lefschetz fibration on $D^6$ whose regular fibers are $V_\epsilon(p,q,2)$. Moreover, the monodromy consists of the product of $(p-1)(q-1)$ right-handed Dehn twists along embedded Lagrangian $2$-spheres in $V_\epsilon(p,q,2)$ intersecting each other in an explicit pattern. In particular, the binding of the induced open book on $S^5$ is $\Sigma(p,q,2)$.
\end{proposition}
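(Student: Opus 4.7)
My plan is to realize the claimed Lefschetz fibration as a Morsification of the classical Milnor fibration of the Brieskorn singularity
$$f(z_0,z_1,z_2)=z_0^p+z_1^q+z_2^2.$$
First I would fix radii $0<\epsilon\ll\delta$ small enough that Milnor's fibration theorem applies at the origin. The ``Milnor ball'' $E_0:=B^6_\delta\cap f^{-1}(D^2_\epsilon)$ is then a compact topological $6$-ball by the conic-structure theorem, and $f|_{E_0}\colon E_0\to D^2_\epsilon$ is a singular holomorphic fibration whose unique singular fiber lies over $0$ and whose generic fiber is $V_\epsilon(p,q,2)$.

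Next I would apply a generic linear Morsification, replacing $f$ by $\tilde f=f+\sum_{i=0}^{2}\lambda_i z_i$ with small generic coefficients. Since $f$ is weighted-homogeneous, classical singularity theory (Brieskorn, Milnor) guarantees that every critical point of $\tilde f$ is nondegenerate, sits inside $B^6_\delta$ with distinct critical value in $D^2_\epsilon$, and that the total number of such points equals the Milnor number
$$\mu(f)=(p-1)(q-1)(2-1)=(p-1)(q-1).$$
Near each critical point the holomorphic Morse lemma supplies local complex coordinates in which $\tilde f$ has the standard quadratic form $w_1^2+w_2^2+w_3^2$, verifying condition (2) of a compatible Lefschetz fibration. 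Setting $E:=B^6_\delta\cap\tilde f^{-1}(D^2_\epsilon)$, smallness of the perturbation together with Milnor's conic structure identifies $E$ with $D^6$ (after rounding corners), while conditions (3) and (4) follow because $\tilde f|_{\partial B^6_\delta}$ remains transverse to $D^2_\epsilon$ as soon as the $\lambda_i$ are small; the boundary open book induced on $S^5$ is then, up to isotopy, the Milnor open book recalled in the excerpt, with binding $\Sigma(p,q,2)$ and pages $V_\epsilon(p,q,2)$.

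Finally, the global monodromy of $\pi:=\tilde f|_E$ coincides with the classical Milnor monodromy of $f$, which by the Sebastiani--Thom theorem together with Pham's join description factors as a product of $(p-1)(q-1)$ right-handed Dehn twists along Lagrangian $2$-spheres $L_{ij}$ (for $1\le i\le p-1$ and $1\le j\le q-1$) serving as vanishing cycles at the Morse critical points of $\tilde f$. Each $L_{ij}$ is Lagrangian because the Lefschetz thimble over a straight-line vanishing path for the normal form $w_1^2+w_2^2+w_3^2$ is a Lagrangian $3$-ball whose boundary is the round Lagrangian $2$-sphere of radius determined by the modulus of the critical value. Pham's polyhedral model of the Milnor fiber as the iterated join of the zero-dimensional Milnor fibers of $z^p$, $z^q$, $z^2$ then exhibits the intersection pattern of the $L_{ij}$ explicitly, combinatorially as the join of two $A$-type Dynkin diagrams.

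The main obstacle will be the identification $E\cong D^6$ together with the horizontal-boundary product structure of condition (4); both should fall out of a relative isotopy interpolating $\tilde f\leadsto f$ while preserving the fiber-bundle structure on $\partial B^6_\delta\cap f^{-1}(D^2_\epsilon)$ (Ehresmann applied to the conic-structure theorem), but the bookkeeping is delicate. Nailing down the signs in the intersection numbers of the $L_{ij}$ is then a finite combinatorial tabulation inside Pham's join model.
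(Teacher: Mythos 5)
Your construction is the same one the paper uses---the restriction to a ball of a linear morsification of $z_0^p+z_1^q+z_2^2$---but you justify the monodromy and intersection-pattern statements by a genuinely different route. The paper first records the known compatible Lefschetz fibration $\pi(z_0,z_1)=z_0^p+z_1^q+M_\delta(z_0,z_1)$ on $D^4$ with fiber $V_\epsilon(p,q)$ and $(p-1)(q-1)$ vanishing circles (from Akbulut--Ozbagci), and then \emph{suspends}: adding $z_2^2$ keeps the same critical points (now with $z_2=0$) and critical values, and the vanishing cycles of $\widetilde\pi$ are exhibited as matching spheres of the auxiliary fibration $\Pi(z_0,z_1,z_2)=z_2$ on the fiber $V_\epsilon(p,q,2)$, following Seidel, so the intersection pattern is literally inherited from the circles downstairs. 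You instead invoke the Milnor fibration, the count $\mu=(p-1)(q-1)$, Picard--Lefschetz, and the Sebastiani--Thom/Pham join model. Both arguments are correct; yours is shorter and rests on classical singularity theory, while the paper's suspension picture buys exactly what the rest of the paper needs and your argument does not track: the proper embedding of fibers $V_\epsilon(p,q)\subset V_\epsilon(p,q,2)$ and the fact that each $2$-sphere vanishing cycle is the suspension of a vanishing circle of $\pi$, which is what makes $(\widetilde\pi,\pi)$ a compatible \emph{relative} Lefschetz fibration on $(D^6,D^4)$ (Remark \ref{rmk:Compatible_Rel_Lefs_fib}) and feeds into Proposition \ref{prop3.1}. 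Two small cautions: the identification $E\cong D^6$ and the horizontal-boundary product structure, which you rightly flag as delicate in the Milnor-tube setup, are sidestepped in the paper by simply restricting the global polynomial to the round ball $D^6\subset\mathbb{C}^3$; and ``the join of two $A$-type Dynkin diagrams'' must be interpreted through the Gabrielov/Pham intersection formulas---the resulting graph is the $(p-1)\times(q-1)$ grid with diagonals of Figure \ref{fig:Intersection_pattern}, not a naive graph-theoretic join.
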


\begin{remark}
The explicit intersection pattern of the vanishing cycles (i.e., Lagrangian $2$-spheres in the above statement) are, in fact, the generalization of the explicit pattern described for compatible Lefschetz fibrations on $D^4$ given in \cite{ao}. As we will explain in the proof of the proposition, such patterns are described by plumbing diagrams as we depict in Figure \ref{fig:Intersection_pattern} for three particular cases.
\end{remark}
	 	 	
\begin{figure}[H]
   \centering
   \includegraphics[scale=1.4]{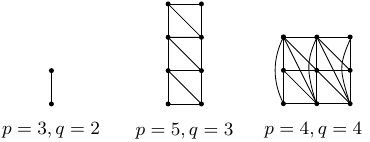}
   \caption{Intersection patterns for some particular cases.}
   \label{fig:Intersection_pattern}
\end{figure}
	 	
\begin{proof} [Proof of Proposition \ref{prop2.1}]
We begin with a compatible Lefschetz fibration on $D^4$ whose monodromy is determined by the torus link $\Sigma(p,q)$, and then extend it to a compatible Lefschetz fibration on $D^6$ whose monodromy is determined by the Brieskorn manifold $\Sigma(p,q,2)$. Denote by $(z_0, z_1)$ the complex coordinates on $\mathbb{C}^2$. The restriction of the map $$\pi: \mathbb{C}^2 \rightarrow \mathbb{C}, \quad \pi(z_{0},z_{1})=z_0^p+z_1^q +M_\delta(z_0,z_1)$$ defines a compatible Lefschetz fibration on $D^4\subset \mathbb{C}^2$. Here, $ M_\delta(z_0,z_1) $ is a morsification of the polynomial $ z_0^p+z_1^q $ where $\delta=(\delta_0,\delta_1)$ for sufficiently small $\delta_0,\delta_1$. (Note that the Hessian of the polynomial $ z_0^p+z_1^q$ is singular if $ p>2$ or $q>2 $ or both. However, Lefschetz fibrations have nondegenerate critical points by definition, so one needs to morsify the polynomial $ z_0^p+z_1^q $.) We will consider the morsification $M_\delta(z_0,z_1)=-\delta_0 z_0-\delta_1z_1$ so that for $\delta_0$ and $\delta_1$ sufficiently small, one can easily check that the above map $\pi: D^4 \rightarrow D^2\subset \mathbb{C}$ has nondegenerate critical points.
	 		
The map $\pi$ has exactly $(p-1)(q-1)$ many nondegenerate distinct critical points, and is injective on the set of its critical points, and so, there are $(p-1)(q-1)$ many vanishing cycles one for each critical value of $\pi$. For a regular value $0<\epsilon \in \mathbb{R}\subset \mathbb{C}$, the regular fiber $F= \pi^{-1}(\epsilon)$ is a Seifert surface $V_\epsilon (p,q)$ of the $(p,q)$-torus link $\Sigma(p,q)$. The monodromy $\varphi_{p,q}$ of the Lefschetz fibration is the product of $(p-1)(q-1)$ right-handed Dehn twists along Lagrangian circles \cite{ao}.
	 		
Next we will consider the ``\textit{suspension}'' $\widetilde{\pi}:D^6 \rightarrow D^2$ of $\pi$.  This is not a customary use of the word ``suspension'', but it is the best option to give the idea presented here. However, its customary meaning will be referred when vanishing cycles are considered.
	 		
By considering the coordinates $(z_0,z_1,z_2)$ on $\mathbb{C}^3$, we extend $\pi$ to obtain Lefschetz fibration $$\widetilde{\pi}:D^6\rightarrow D^2, \quad \widetilde{\pi} (z_0,z_1,z_2)=z_0^p+z_1^q+z_2^2+M_\delta(z_0,z_1).$$
The map $\widetilde{\pi}$ has nondegenerate distinct critical points. To see this, note that $\widetilde{\pi}$ is obtained by ``suspending'' $\pi$ by adding the term $z_2^2$, and so the $z_0$ and $z_1$ coordinates of the critical points of the map $\widetilde{\pi}$ are the same as those of $\pi$, and their $z_2$ coordinates are zero. Therefore, the critical points of $\widetilde{\pi}$ are distinct. Also, one can easily check that the determinant of the Hessian of $\widetilde{\pi}$ is nonzero at each critical point.
	 		
The map $\widetilde{\pi}$ is injective on the set of its $(p-1)(q-1)$ nondegenerate distinct critical points, so it has $(p-1)(q-1)$ vanishing cycles corresponding to its critical values. The regular fiber $\widetilde{F}= \widetilde{\pi}^{-1}(\epsilon)$ of $\widetilde{\pi}$ is the Brieskorn variety $V_\epsilon (p,q,2)$. Note that $\pi$ and $\widetilde{\pi}$ have the same morsification term and if we intersect the fiber $\widetilde{F}$ with $z_2=0$ plane, we obtain the fiber $F$. Also, we have $F\setminus \partial F\subset \widetilde{F}\setminus \partial \widetilde{F}$ and $\partial F=\partial \widetilde{F}\cap F$ which means that the  regular fibers of $\pi$ is properly embedded in those of $\widetilde{\pi}$. Moreover,  as discussed above the critical points of $\widetilde{\pi}$ are the same as those of $\pi$ (with additional $z_2$-coordinates). To understand the vanishing cycles of $\widetilde{\pi}$, one can investigate the Lefschetz fibration $$\Pi:\widetilde{F}\to D^2,\quad \Pi(z_0,z_1,z_2)=z_2$$  on the fiber $\widetilde{F}$. For each critical point of $\pi$, there are exactly two critical points of $\Pi$. So, for each  vanishing cycle in $\pi^{-1}(0)$, there are two vanishing cycles in the fiber of $\Pi$. Since these two vanishing cycles coincide, one can form a matching sphere in ${\widetilde{F}} $ by joining the corresponding two critical points of $\Pi$. These matching spheres form a basis of vanishing cycles in $\widetilde{F}$ for the fibration $\widetilde{\pi}$. They can be taken to intersect in the fiber $\widetilde{\pi}^{-1}(0)$ according to the same graph as the vanishing cycles in $\pi^{-1}(0)$, (see \cite{sei}). In conclusion, the vanishing cycles in the fiber $\widetilde{F}$ of  $\widetilde{\pi}$ are these Lagrangian spheres which we call the suspensions of those in the fiber $F$ of $\pi$.  Hence, introducing a $z_2^2$ term extends (``suspends'') the Lefschetz fibration $\pi$ on $D^4$ to the Lefschetz fibration $\widetilde{\pi}$ on $D^6$, and the monodromy $\widetilde{\varphi}_{p,q}$ of $\widetilde{\pi}$ is the monodromy of the fibered Brieskorn manifold $\Sigma(p,q,2)\subset S^5$ which is the product of $(p-1)(q-1)$ right-handed Dehn twists along Lagrangian spheres which are the suspensions of the corresponding vanishing cycles of $\pi$. In particular, note that the binding $\Sigma(p,q)$ of the  induced open book on $S^3$ is properly embedded in the binding  $\Sigma(p,q,2)$ of the induced open book on $S^5$.

Finally, it is known (see \cite{ao}) that the $(p-1)(q-1)$ vanishing cycles of $\pi$ (sitting on $F$) intersects each other according to a certain intersection pattern which can be described by a plumbing diagram as depicted (for several cases) in Figure \ref{fig:Intersection_pattern}. (A bold dot is drawn for each vanishing cycle and a line segment (or an arc) is drawn connecting two dots for each common point of the corresponding cycles.) Since the vanishing cycles of $\widetilde{\pi}$ (sitting on $\widetilde{F}$) are the suspensions of those of $\pi$, they intersect each other in the same way (and at the same points) that their equators intersect each other. Hence, the vanishing cycles of $\widetilde{\pi}$ (there are exactly $(p-1)(q-1)$ of them) intersect each other according to the same intersection pattern described by the same plumbing diagram.	 		
\end{proof}

\begin{remark} \label{rmk:Compatible_Rel_Lefs_fib}
As we realized above, the vanishing cycles (circles) of $\pi$ on $D^4$ are the equators of the vanishing cycles ($2$-spheres) of $\widetilde{\pi}$ on $D^6$. Indeed, the above proof verifies that $(\widetilde{\pi},\pi)$ is a compatible relative Lefschetz fibration on the (trivial) Stein pair $(D^6,D^4)$ equipped with their standard Stein structures. We will write the monodromy of $(\widetilde{\pi},\pi)$ as the pair $(\widetilde{\varphi}_{p,q},\varphi_{p,q})$.
\end{remark}
	 	
\begin{example}
For $p=3, q=2$, consider the map $\pi:\mathbb{C}^2\rightarrow \mathbb{C}$ defined by $$\pi(z_0,z_1)=z_0^3-\dfrac{z_0}{3^5}+z_1^2$$ where we take $M_\delta(z_0,z_1)=-\dfrac{z_0}{3^5}$ ($\delta_1=0$). One can check $\pi$ restricts to a compatible Lefschetz fibration $\pi:D^4 \to D^2$ as follows: By (simultaneously) solving the equations
$${\partial \pi  \over \partial z_0}=0, \quad {\partial \pi  \over \partial z_1}=0,$$
we see that $(-1/ 27,0)$ and $ (1/ 27,0)$  are the critical points of $\pi$. By substituting these into $\pi$, we get the corresponding critical values $-0.00020322105$ and $-0.00010161052$, respectively.
	 		
Then one needs to calculate the determinant of the Hessian matrix at the critical points. The Hessian matrix is given by (see \cite{ebeling} for a complete discussion):	 		
\begin{center}
$\begin{bmatrix}
\dfrac{\strut \partial^2\pi} {\strut \partial z_0 \partial z_0}& \dfrac{\strut \partial^2\pi } {\strut \partial z_0 \partial z_1}  \\
\dfrac{\strut \partial^2\pi }{\strut\partial z_1 \partial z_0}&\dfrac{\strut \partial^2\pi}{\strut \partial z_1 \partial z_1}
\end{bmatrix}$
=
$\begin{bmatrix}
6z_0&0\\
	 0&2
\end{bmatrix}$
\end{center}
which has nonzero determinants at $(\pm1/ 27,0)$. Therefore, $\pi$ has two nondegenerate critical points and two vanishing cycles corresponding to the critical values above. The regular fiber $F=\pi^{-1}(1)=\{z_0^3-\dfrac{z_0}{3^5}+z_1^2=1\}$ is the plumbing of two left-handed Hopf bands $V_1 (3,2)$ ($2$-dimensional $A_2$-Milnor fiber). The monodromy of the Lefschetz fibration is the product of two right-handed Dehn twists along the red and blue curves which is given in Figure \ref{fig:Braid}.
	 		
\begin{center}
\begin{minipage}{1\textwidth}
\begin{figure}[H]
   \includegraphics[scale=0.8]{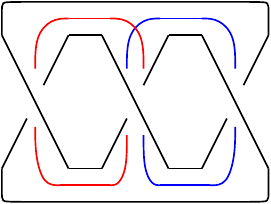}
	\caption{The regular fiber and the monodromy curves of the torus knot $\Sigma(3,2)$ (trefoil).}
	\label{fig:Braid}
	\end{figure}
\end{minipage}\hfill
\end{center}
	 		
Now, we extend (suspend) $\pi$ to a compatible Lefschetz fibration $\widetilde{\pi}$ on $D^6$ which is given by $\widetilde{\pi}(z_0,z_1,z_2)=z_0^3-\dfrac{z_0}{3^5}+z_1^2+z_2^2$. Solving the equations
$${\partial \widetilde{\pi} \over \partial z_0}=0, \quad {\partial \widetilde{\pi}  \over \partial z_1}=0, \quad {\partial \widetilde{\pi}  \over \partial z_2}=0$$
 (simultaneously), we see that $ ({-1/ 27},0,0)$ and $({1/ 27},0,0)$ are the critical points of $\widetilde{\pi}$, and corresponding critical values (coinciding with those of $\pi$) are $-0.00020322105$ and $-0.00010161052$, respectively. The Hessian matrix of $\widetilde{\pi}$ is

\begin{center}
$\begin{bmatrix}
\dfrac{	\strut\partial^2 \widetilde{\pi}} {\strut\partial z_0 \partial z_0}&\dfrac{\strut \partial^2 \widetilde{\pi}}{\strut\partial z_0 \partial z_1}&  \dfrac{\strut\partial^2 \widetilde{\pi}} {\strut\partial z_0 \partial z_2}\\
\dfrac{\strut\partial^2 \widetilde{\pi}}{\strut\partial z_1 \partial z_0}& \dfrac{\strut\partial^2 \widetilde{\pi}} {\strut\partial z_1 \partial z_1} &	 \dfrac{\strut\partial^2 \widetilde{\pi}} {\strut\partial z_1 \partial z_2}\\
\dfrac{\strut\partial^2 \widetilde{\pi}} {\strut\partial z_2 \partial z_0}& \dfrac{\strut\partial^2 \widetilde{\pi}}{\strut\partial z_2 \partial z_1} &\dfrac{\strut	 \partial^2 \widetilde{\pi}} {\strut\partial z_2 \partial z_2}
\end{bmatrix}$
=
$\begin{bmatrix}
6z_0&0&0\\
0&2&0\\
0&0&2
\end{bmatrix} $
\end{center}
which has nonzero determinants at $(\pm 1/ 27,0,0)$, and so the map $\widetilde{\pi}$ has two nondegenerate critical points and two vanishing cycles corresponding to the critical values. The regular fiber $\widetilde{F}=\widetilde{\pi}^{-1}(1)=\{z_0^3-\dfrac{z_0}{3^5}+z_1^2+z_2^2=1\}$ is the Brieskorn variety $V_1(3,2,2)$ ($4$-dimensional $A_2$-Milnor fiber) into which the fiber $F=V_1(3,2)$ of $\pi$ is properly embedded. Finally, we note that the intersection pattern of the vanishing cycles of both $\pi$ and  $\widetilde{\pi}$ is given in Figure \ref{fig:Intersection_pattern}-(a).
\end{example}

\begin{remark}
For any $n\geq 2$, one can generalize the above construction to obtain a compatible Lefschetz fibration on $D^{2n}$ equipped with its standard Stein structure: For a pair $(p,q)$,  consider
$$\pi:\mathbb{C}^n\rightarrow\mathbb{C}, \quad \pi(z_0,z_1,z_2,...,z_{n-1})=z_0^p+z_1^q+\displaystyle\sum_{i=2}^{n-1}z_i^2+M_\delta(z_0,z_1)$$ where $M_\delta(z_0,z_1)$ is a suitable morsification term so that all critical points are nondegenerate. Then the restriction map $\pi:D^{2n}\rightarrow D^2$ defines a compatible Lefschetz fibration on some $D^{2n}$ over a disk $D^2 \subset \mathbb{C}$. The regular fibers are diffeomorphic to the Brieskorn variety $V_\epsilon^{2n-2}(p,q,2,...,2)$, and the monodromy consists of the product of  $(p-1)(q-1)$ many right-handed Dehn twists. In particular, the binding of the induced open book is the Brieskorn manifold $\Sigma^{2n-3}(p,q,2,...,2)$.
\end{remark}
	

\section{Suspended Legendrian  2-links in the complement of Brieskorn manifolds}\label{lyon}
	 	
In \cite{plam}, Plamenevskaya extended the result of Lyon \cite{lyon}: For any Legendrian $1$-link $K$ in $(S^3,\xi^3_{std})$, there exists a torus link $\Sigma(p,q)$ such that $K$ is embedded in the interior of a Seifert surface of $\Sigma(p,q)$. In this section, we adapt these results for relative Legendrian $2$-links in $(S^5,\xi^5_{std})$. More precisely,
	 	
\begin{proposition}\label{prop3.1} Let $K=\sqcup_{j=1}^\ell K_j$ be a Legendrian $1$-link in the standard contact manifold $(S^3,\xi^3_{std})\subset (S^5,\xi^5_{std})$ such that each $ K_j$ is the boundary of  a Lagrangian $ 2 $-disk $  D^2 $ in $ D^4\subset \mathbb{C}^2 $ which is cylindrical near the contact boundary. Then there exists a Legendrian $2$-link $L=\sqcup_{j=1}^\ell L_j$ such that $(L,K)$ is a relative Legendrian $2$-link in the relative contact pair $(S^5,S^3)$ and a pair of Brieskorn manifolds $(\Sigma(p,q,2),\Sigma(p,q))$ contactly embedded in $(S^5,S^3)$ such that $\Sigma(p,q)$ is contactly and properly embedded in $\Sigma(p,q,2)$, and for any component $L_j$ of $L$ and its equator $K_j \subset K$, the following are satisfied:
\begin{enumerate}
			\item[(i)] $L_j$ is embedded in $int(V_\epsilon(p,q,2))$ where $V_\epsilon(p,q,2)\subset S^5$ is a Seifert filling of $\Sigma(p,q,2)$.
			\item[(ii)] $K_j$ is embedded in $int(V_\epsilon(p,q))$ where $V_\epsilon(p,q)\subset S^3$ is a Seifert surface of $\Sigma(p,q)$.
			\item[(iii)] The page framing of $L_j$ in $V_\epsilon(p,q,2)$ is equal to its contact framing in $(S^5,\xi^5_{std})$.
			\item[(iv)] The page framing of $K_j$ in $V_\epsilon(p,q)$ is equal to its contact framing in $(S^3,\xi^3_{std})$.
			\item[(v)] $V_\epsilon(p,q)$ is a properly embedded Weinstein subdomain of $V_\epsilon(p,q,2)$.
\end{enumerate}		
\end{proposition}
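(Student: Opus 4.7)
The plan is to combine Plamenevskaya's theorem in dimension three with the suspension construction of Proposition~\ref{prop2.1}. First, since $K=\sqcup_{j=1}^{\ell} K_j$ is a Legendrian link in $(S^3,\xi^3_{std})$, I would apply Plamenevskaya's extension of Lyon's theorem to obtain a pair $(p,q)$ and a Seifert surface $V_\epsilon(p,q)\subset S^3$ of $\Sigma(p,q)$ such that $K$ embeds in its interior with page framing equal to its contact (Thurston--Bennequin) framing. This immediately supplies conditions (ii) and (iv). Throughout I view $S^3=S^5\cap\{z_2=0\}\subset\C^3$, which realizes the proper contact embedding $(S^3,\xi^3_{std})\hookrightarrow(S^5,\xi^5_{std})$ used in the statement.

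Next, to transport this data into dimension five I would apply the suspension $\widetilde{\pi}(z_0,z_1,z_2)=\pi(z_0,z_1)+z_2^2$ of Proposition~\ref{prop2.1}, for the same pair $(p,q)$ as above. This realizes $V_\epsilon(p,q,2)\subset\C^3$ as a regular fiber of a compatible Lefschetz fibration on $D^6$, with $V_\epsilon(p,q)=V_\epsilon(p,q,2)\cap\{z_2=0\}$ a properly embedded complex submanifold and hence, with the restricted Stein structure, a properly embedded Weinstein subdomain; this yields (v). Taking boundaries inside $S^5$ gives the proper contact embedding $\Sigma(p,q)\subset\Sigma(p,q,2)$ asserted in the statement, and identifies the ambient $V_\epsilon(p,q,2)$ as the required Seifert filling for (i).

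To construct the components $L_j$ of the desired Legendrian $2$-link, I would exploit the hypothesis that $K_j$ bounds a Lagrangian $2$-disk $\mathcal{D}_j\subset D^4$ which is cylindrical near $\partial D^4$. The idea is to double $\mathcal{D}_j$ in the $z_2$-direction: glue two copies of $\mathcal{D}_j$, placed at $z_2=\pm\epsilon'$ for a small real parameter, along a Lagrangian cylindrical cap in the $z_2$-plane that matches $\mathcal{D}_j$ on its cylindrical collar. The result is a Lagrangian $3$-ball $\Delta_j\subset D^6$ whose boundary $L_j:=\partial\Delta_j$ is a smooth Legendrian $2$-sphere in $(S^5,\xi^5_{std})$ with equator $L_j\cap\{z_2=0\}=K_j$; this makes $(L,K)$ a relative Legendrian $2$-link. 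Moreover, by the matching-sphere description of vanishing cycles in the suspended fibration $\widetilde{\pi}$ (used in the proof of Proposition~\ref{prop2.1}), $L_j$ lies inside the page $V_\epsilon(p,q,2)$, so (i) is established.

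The remaining point, and the main technical obstacle, is the framing statement (iii). The key observation is that both framings split as products along a neighborhood of $K_j$: the contact plane field $\xi^5_{std}$ restricts along $S^3$ so as to decompose into $\xi^3_{std}$ plus the trivial symplectic $z_2$-direction summand, and the page $V_\epsilon(p,q,2)$ sits near $V_\epsilon(p,q)$ as a trivial bundle in the $z_2$-direction because $V_\epsilon(p,q)=V_\epsilon(p,q,2)\cap\{z_2=0\}$ is a transverse slice. Consequently the normal framings of $L_j$ in $(S^5,\xi^5_{std})$ and in $V_\epsilon(p,q,2)$ reduce to the corresponding normal framings of $K_j$ in $(S^3,\xi^3_{std})$ and in $V_\epsilon(p,q)$ plus an identical trivial $z_2$-summand; since these two reduced framings coincide by the first step, (iii) follows. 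Most of the care in the write-up will be devoted to this compatibility, verifying that the doubled Lagrangian disk indeed assembles into a smooth Legendrian sphere (no isotropic-but-not-Legendrian corner along the equator and no framing twist introduced by the gluing), which is where the cylindrical-near-boundary hypothesis on $\mathcal{D}_j$ is used decisively.
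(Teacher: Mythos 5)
Your first step (Plamenevskaya for (ii) and (iv)), the suspension picture for (v), and the idea of ``doubling'' the Lagrangian disk $\mathcal{D}_j$ in the $z_2$-direction are all in line with the paper's proof. The genuine gap is in how you double: you place two copies of $\mathcal{D}_j$ at the constant slices $z_2=\pm\epsilon'$ and glue them by a cylinder, producing the boundary sphere of a product Lagrangian $\mathcal{D}_j\times[-\epsilon',\epsilon']\subset\mathbb{C}^2\times\mathbb{C}$. That sphere does \emph{not} lie in the Brieskorn variety $V_\epsilon(p,q,2)$: membership in the hypersurface $\{z_0^p+z_1^q+z_2^2+M_\delta(z_0,z_1)=\epsilon\}$ forces $z_2^2=\epsilon-\pi(z_0,z_1)$, which varies over $\mathcal{D}_j$ and is certainly not the constant $(\epsilon')^2$. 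So condition (i) is not established, and the appeal to ``the matching-sphere description of vanishing cycles'' does not apply to your construction, since matching spheres are built inside the fiber, not in the ambient $\mathbb{C}^3$. The paper closes exactly this gap by using the projection $f:V_\epsilon(p,q,2)\to\mathbb{C}^2$, $f(z_0,z_1,z_2)=(z_0,z_1)$, which is a $2$-fold cover branched over the Seifert surface containing $K$: after arranging that $\mathcal{D}_j$ meets the branch locus only along its boundary $K_j$, the preimage $f^{-1}(\mathcal{D}_j)$ is two Lagrangian disks glued along $K_j$, i.e.\ a Lagrangian $2$-sphere $L_j$ sitting inside the page by construction. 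Your product sphere would at best be isotopic to this one, and you would still owe an isotopy argument keeping the equator and the framings under control.

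The second, smaller issue is (iii). A framing of the normal bundle of the $2$-sphere $L_j$ is a trivialization over all of $L_j$, and your splitting argument only compares the contact and page framings along the equator $K_j$, where the $z_2$-splitting of $\xi^5_{std}$ and of $TV_\epsilon(p,q,2)$ is available; over the two hemispheres of $L_j$, which leave the slice $z_2=0$, no such reduction to $K_j$ exists. The paper instead proves a general statement (Lemma \ref{lem:contact_page_framing}): for any Legendrian $n$-sphere on a page of a compatible open book, the contact framing and the page framing agree, because both normal bundles split as $J(TL)\oplus\langle R|_L\rangle\cong T^*S^n\oplus\epsilon$ and the compatibility homotopy between $\xi|_L$ and $TF|_L$ carries one trivialization to the other. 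You should either invoke such a lemma or complete your comparison over the hemispheres (for instance by analyzing the difference of the two trivializations as a map $S^2\to SO(3)$ globally), rather than only along the equator.
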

	 	
\begin{proof}  
	By \cite{plam}, we may assume there exists a torus knot $\Sigma(p,q)$ such that the equatorial Legendrian $1$-link $K\subset (S^3,\xi^3_{std})$ is embedded in the interior of a Seifert surface $F:= V_\epsilon(p,q)$ of $\Sigma(p,q)$. Also, since $F$ is the page of a compatible open book (with binding $\Sigma(p,q)$) on $(S^3,\xi^3_{std})$, the framing of each component of $K$ in $F$ is equal to its contact framing in $(S^3,\xi^3_{std})$ (see \cite{ao}). Therefore, (ii) and (iv) follow. By assumption, for each $j=1,...,\ell$, there exists an embedded Lagrangian disk $ D_j $ in $D^4\subset \mathbb{C}^2$ such that the boundary is the embedded Legendrian $K_j$ in $ S^3$. In what follows, we construct the Lagrangian $2$-knot $L_j$ in the Brieskorn variety $V_\epsilon(p,q,2)$ by lifting the disk $D_j$ under a double branched cover. We will also make use of the fact that $V_\epsilon(p,q,2)$ is obtained by suspending the Seifert surface $F=V_\epsilon(p,q)$ of $K$ as observed in the proof of Proposition \ref{prop2.1}. Consider the map $$ f: V_\epsilon(p,q,2)\to \mathbb{C}^2,\quad f(z_0,z_1,z_2)=(z_0,z_1) $$ where $(z_0,z_1,z_2)$ are complex coordinates on $\mathbb{C}^3$. This map is a two-fold cover branched along the fiber $\pi^{-1}(0)$ in $\mathbb{C}^2$ where $\pi$ is the Lefschetz fibration given in the proof of Proposition \ref{prop2.1}. For every point except on the fiber $\pi^{-1}(0)$, there are two preimage points of $f$ which are related by changing the sign on the $z_2$-coordinate. A disk that is not intersecting the fiber $\pi^{-1}(0)$ lifts to two disjoint embedded disks in $V_\epsilon(p,q,2)$. Since $f$ is a holomorphic covering map away from $\pi^{-1}(0)$, we may assume that $f$ is a symplectomorphism so that the two lifted disks are Lagrangian. If the disk has boundary equal to $K_j$ in $\pi^{-1}(0)$ and  a neighborhood of the boundary  of the disk projects to a line $[0,\epsilon)$ in $\mathbb{C}$ under $\pi$, then the two lifts of $D_j$ can be glued to form a smooth Lagrangian sphere in $V_\epsilon(p,q,2)$ whose equator projects to $(-\epsilon,\epsilon)$ under the Lefschetz fibration $\Pi$ on $V_\epsilon(p,q,2)$ also given in the proof of Proposition \ref{prop2.1}. Note that, if you take $(z^2\circ \Pi)$ or $(\Pi)^2$ is equal to $\pi\circ f$. Therefore, (i) follows.
	
	As explained in the proof of Proposition \ref{prop2.1}, $\widetilde{F}=V_\epsilon(p,q,2)$ and $F$ are the regular fibers of a compatible relative Lefschetz fibration on the Stein pair $(D^6,D^4)$, so $F$ is a Weinstein subdomain of $\widetilde{F}$ as claimed in (v). In particular, $\widetilde{F}$ is the page of the (induced) compatible open book on $(S^5,\xi^5_{std})$, and so its boundary (the binding) $\Sigma(p,q,2)$ is a contact submanifold of $(S^5,\xi^5_{std})$ and the binding $\Sigma(p,q)$ of the (induced) compatible open book on $(S^3,\xi^3_{std})$ is contactly embedded in $\Sigma(p,q,2)$.
	
	Finally, (iii) follows from Lemma \ref{lem:contact_page_framing} below which applies to all (odd) dimensions. This concludes the proof of Proposition \ref{prop3.1}.
\end{proof}

\begin{lemma} \label{lem:contact_page_framing}
 Let $L$ be any Legendrian $n$-sphere on a page $F^{2n}$ of a compatible open book on a contact manifold $(M^{2n+1},\xi=\emph{Ker}(\alpha))$. Then the page framing of $L$ coincides with its contact framing.
\end{lemma}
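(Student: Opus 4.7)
The plan is to identify a neighborhood of $L$ in $(M,\xi)$ with a neighborhood of the zero section in the $1$-jet bundle $J^1(L)=T^*L\times\mathbb{R}$ in such a way that the page $F$ goes to $T^*L\times\{0\}$ and the Reeb parameter is the $\mathbb{R}$-coordinate. Both the contact framing (coming from the Weinstein Legendrian neighborhood theorem) and the page framing (coming from a product neighborhood $F\times(-\epsilon,\epsilon)$ of the page) are then read off from the same splitting $\nu_L \cong T^*L\oplus\mathbb{R}\,\partial_t$ of the normal bundle, so they must coincide.

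Concretely, I would proceed in three steps. First, because $TL\subset\xi=\ker\alpha$, the Liouville primitive $\lambda:=\alpha|_F$ vanishes on $L$, so $L$ is an exact Lagrangian submanifold of $(F,d\lambda)$; the Weinstein Lagrangian neighborhood theorem supplies a symplectomorphism $\Phi_0$ from a neighborhood of $L$ in $F$ to a neighborhood of the zero section in $(T^*L,d\lambda_{can})$ that is the identity on $L$ and satisfies $\Phi_0^*\lambda_{can}=\lambda$. Second, compatibility of $\alpha$ with the open book forces the Reeb vector field $R_\alpha$ to be positively transverse to the interior of each page; hence the map $(x,t)\mapsto\phi^t_{R_\alpha}(x)$ embeds $U\times(-\epsilon,\epsilon)$ (where $U$ is a neighborhood of $L$ in $F$) diffeomorphically onto a neighborhood of $L$ in $M$. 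Composing with $\Phi_0$ on the first factor gives coordinates $(q,p,t)$ on a tubular neighborhood of $L$ in $M$ in which $R_\alpha=\partial_t$.

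Third, in these coordinates one checks that $\alpha$ becomes the standard contact form on $J^1(L)$. The identities $\iota_{\partial_t}\alpha=1$ and $\mathcal{L}_{\partial_t}\alpha=\iota_{\partial_t}d\alpha+d\iota_{\partial_t}\alpha=0$ force $\alpha=dt+\beta$ for a $t$-independent $1$-form $\beta$ pulled back from $T^*L$; setting $t=0$ and using the first step yields $\beta=\lambda_{can}$, hence $\alpha=dt+\lambda_{can}$. The coordinate map is therefore a contactomorphism onto a standard Weinstein neighborhood of $L$, in which the cotangent fibers of $T^*L\to L$ together with $\partial_t$ simultaneously realize both the page framing (normal directions in $F$ plus the transverse $\partial_t$) and the contact framing (cotangent fibers plus the Reeb direction in the $J^1$-model).

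The single point that requires care is the second step: that the Reeb flow, applied to a fixed Weinstein neighborhood inside the page, sweeps out a genuine tubular neighborhood of $L$ in $M$ rather than just a formal model. Since $L$ is compact and $R_\alpha$ is transverse to $F$ along $L$, a uniform flow time works and the resulting map is a diffeomorphism onto its image for small $\epsilon>0$. No deeper obstacle arises; once the coordinates are in place and $\alpha$ is shown to be $t$-independent, the identification with $(J^1(L),dt+\lambda_{can})$ is automatic, and the equality of the two framings follows at once.
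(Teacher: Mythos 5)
Your proof is correct in substance, but it follows a genuinely different route from the paper's. The paper never builds a local model: it fixes compatible almost complex structures $J$ on $(\xi,d\alpha|_{\xi})$ and $\widetilde{J}$ on $(F,d\alpha|_{F})$, writes the contact framing as $J(TL)\oplus\langle R|_L\rangle$ and the page framing as $\widetilde{J}(TL)\oplus\langle R|_L\rangle$, and then uses the compatibility condition (that $TF$ can be isotoped to $\xi$ through oriented $2n$-plane fields near the page) to produce a homotopy of bundles $E_t|_L$ from $\xi|_L$ to $TF|_L$ fixing $TL$ and transverse to $R$, which drags $J(TL)$ to $\widetilde{J}(TL)$ and hence isotopes one trivialization of $N(L)$ to the other. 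Your argument instead assembles an explicit $J^1(L)=T^*L\times\mathbb{R}$ neighborhood from the Weinstein Lagrangian neighborhood theorem inside the page together with the Reeb flow, and reads both framings off the single splitting $T^*L\oplus\mathbb{R}\,\partial_t$; this is more concrete and yields the stronger statement that the two trivializations are realized by literally the same sub-bundles, at the cost of needing the local normal form. Two small points deserve care in your version. First, the bare Weinstein theorem only matches symplectic forms, not primitives; to get $\Phi_0^*\lambda_{can}=\lambda$ (and hence $\alpha=dt+\lambda_{can}$) you must note that the discrepancy is $dg$ with $g|_L=0$ (using $\lambda|_{TL}=0$ and, for $n=1$, the vanishing of the period) and absorb it by reparametrizing $t\mapsto t+g$, which moves the page to a graph over $T^*L$ but does not change the framing up to the obvious linear homotopy. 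Second, the contact framing as the paper defines it uses a Lagrangian complement $J(TL)$ of $TL$ inside $\xi|_L$, which in your model need not be exactly the cotangent fibers; one should add the remark that any two Lagrangian complements of $TL$ in $(\xi_p,d\alpha)$ form a contractible family, so the resulting framings are homotopic. Neither issue is a genuine obstruction, and your route is closer to the standard argument in the $3$-dimensional literature than the bundle-homotopy argument the paper gives.
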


\begin{proof}
From compatibility we may assume that the Reeb vector field $R$ of $\alpha$ (defined by the equations $\alpha(R)=1$ and $\iota_Rd\alpha=0$) are transverse to both $\xi$ and each page of the open book, and also that $d\alpha$ restricts to a symplectic form on both $\xi$ and on $F$. Let $J$ (resp. $\tilde{J}$) be an almost complex structure on the symplectic bundle $\xi$ (resp. on the (exact) symplectic manifold $F$) compatible with $d\alpha|_{\xi}$ (resp. $d\alpha|_{F}$). Note at each point $p\in L$, we have $T_pL=\xi(p) \cap T_pF$. Observe $Jv \in \xi(p)/T_pL$ for every $v \in T_pL$ (otherwise $d\alpha|_{\xi}$ would not be symplectic). Therefore, the bundle $J(TL)\subset \xi|_{L}$ is a subbundle of the normal bundle $N(L)$ of $L$ in $M$. $N(L)$ can be identified (by Legendrian neighborhood theorem) with $T^*L \oplus \epsilon$ where $\epsilon$ corresponds to the summand generated by the restriction $R|_{L}$ of the Reeb field. Since $L$ is an embedded $n$-sphere, we obtain  $$N(L)= J(TL)\oplus \langle R|_{L} \rangle \cong T^*L \oplus \epsilon \cong T^*S^n \oplus \epsilon \cong S^n \times \mathbb{R}^{n+1}.$$ That is, a compatible almost complex structure $J$ on $(\xi,d\alpha|_{\xi})$ determines a trivialization $\Xi_J: S^n \times \mathbb{R}^{n+1} \to N(L)= J(TL)\oplus \langle R|_{L} \rangle$ which is known as the \textit{contact framing} of $L$.

On the other hand, being Legendrian on a page of a compatible open book, $L$ is Lagrangian in $(F,d\alpha|_{F})$. So, at each $p\in L$, $\tilde{J}v \in T_pF/T_pL$ for every $v \in T_pL$ (as $d\alpha|_{TL}=0$). Thus, the bundle $\tilde{J}(TL)\subset TF|_{L}$ is the normal bundle of $L$ in $F$ which can be identified (by Lagrangian neighborhood theorem) with $T^*L$. Similarly, considering $\epsilon$ as the bundle (of rank 1) generated by $R|_{L}$ (which is transverse to $F$), we obtain $$N(L)= \widetilde{J}(TL)\oplus \langle R|_{L} \rangle \cong T^*L \oplus \epsilon \cong T^*S^n \oplus \epsilon \cong S^n \times \mathbb{R}^{n+1}.$$ That is, a compatible almost complex structure $\widetilde{J}$ on $(F,d\alpha|_{F})$ determines a trivialization $\Xi_{\widetilde{J}}: S^n \times \mathbb{R}^{n+1} \to N(L)= \widetilde{J}(TL)\oplus \langle R|_{L} \rangle$ which is known as the \textit{page framing} of $L$.

It remains to show that the maps $\Xi_{J},\Xi_{\widetilde{J}}:S^n \times \mathbb{R}^{n+1} \to N(L)\subset M$ are isotopic as maps into $M$. Let $N$ be a closed neighborhood of $L$ in the page $F$. Consider the symplectic bundles $E_0=\xi|_{N}\to N$, $E_1=TF|_{N}\to N$ whose symplectic forms both descend from $d\alpha$. Compatibility condition is equivalent to the fact that, via a suitable isotopy through oriented $2n$-plane fields on the (compact) subset $N\subset F$, the fibers of $E_0$ can be made arbitrarily close to the corresponding fibers of $E_1$. Thus, there exists a homotopy of oriented $2n$-plane bundles, say $E_t \to N$ ($t\in [0,1]$), and its restriction $E_t|_{L} \to L$ defines a homotopy of oriented $2n$-plane bundles between $\xi|_{L}\to L$ and $TF|_{L}\to L$. Since each bundle is oriented and $d\alpha$ is a volume form on fibers when $t=0,1$, it is a volume form on fibers of $E_t|_{L} \to L$ for all $t\in [0,1]$, and so is of full rank which means that the Reeb field $R$ of $\alpha$ is transverse to the fibers of $E_t|_{L} \to L$ for all $t$. Also since $TL=\xi|_{L}\cap TF|_{L}$, we may assume that $TL$ is fixed during this homotopy. Thus, we have a homotopy of oriented $2n$-plane bundles $J_t \to L$ ($t\in [0,1]$) between the normal summands $J_0=J(TL)$ and $J_1=\widetilde{J}(TL)$ of $\xi|_{L}$ and $TF|_{L}$, respectively. Using (following) $J_t$ and the transversality of $R$ to the fibers, we conclude that there exists an isotopy $$\Xi_t: S^n \times \mathbb{R}^{n+1} \to N(L)= J_t\oplus \langle R|_{L} \rangle, \quad t\in [0,1]$$ between $\Xi_0=\Xi_J$ and $\Xi_1=\Xi_{\widetilde{J}}$ as claimed.
 			 		
\end{proof}

\begin{definition}
The pair $(L,K)\subset (S^5,S^3)$ constructed in Proposition \ref{prop3.1} will be called a \textit{suspended $2$-link based on K}.
\end{definition}

Clearly, every suspended Legendrian $2$-link is a relative Legendrian $2$-link by its construction. However, whether every relative Legendrian $2$-link in $(S^5,S^3)$ is Legendrian isotopic to a suspended one is not clear in general.  We introduce:

\begin{definition}\label{def:suspendible_link}
A relative Legendrian $2$-link $(L,K)$ in the standard contact pair $(S^5,S^3)$ is said to be \emph{suspendible} if it is Legendrian isotopic to a suspended Legendrian $2$-link based on some Legendrian $1$-link $K'$ (possibly different than $K$) in the standard contact $S^3$.
\end{definition}

\begin{remark} \label{rmk:subcritical}
For the purpose of the present paper, one needs to define suspendibility of a relative Legendrian $2$-link $(L,K)$ embedded in the contact boundary of a subcritical Stein $6$-domain. If $\widetilde{W}$ is such a domain, then it can be written as a product $\widetilde{W}=\widetilde{X} \times D^2$ for some Stein $4$-domain $\widetilde{X}$, and the pair $(\widetilde{W},\widetilde{X})$ is a relative Stein pair because it clearly satisfies the conditions (0)-(2) of Definition \ref{def:Stein_Pair}. (The Weinstein handles of $\widetilde{W}$ can be taken as the thickening of those of $\widetilde{X}$). Note that using a given Weinstein handle decomposition of $\widetilde{X}$, $(\widetilde{W},\widetilde{X})$ can be described by a relative Stein diagram in the dotted form containing only dotted unknots ($1$-handles of $\widetilde{X}$, and so $\widetilde{W}$) and the framed dashed knots ($2$-handles of $\widetilde{X}$, and so $\widetilde{W}$). One should note that such  a diagram is not unique since one can start with a different Weinstein handle decomposition of $\widetilde{X}$ (still defining the same Stein structure on $\widetilde{X}$ (and so on $\widetilde{W}$) up to Stein deformation). So, let $RSD_{\mathcal{H}}(\widetilde{X}\times D^2)$ be the relative Stein diagram (in the dotted form) of $\widetilde{W}=\widetilde{X} \times D^2$ obtained from a given Weinstein handle decomposition $\mathcal{H}$ of $\widetilde{X}$. Recall that, in a diagram $RSD_{\mathcal{H}}(\widetilde{X}\times D^2)$, replacing dotted unknots with $(+1)$-framed unknots, we obtain a relative contact surgery diagram, denoted by $CSD_{\mathcal{H}}(\partial(\widetilde{X}\times D^2))$, describing the contact boundary $\partial(\widetilde{X}\times D^2)$. Therefore, one can realize $(L,K)$ in this surgery diagram by drawing the components of $K$ as solid knots. Forgetting framings on the $(+1)$-framed unknots and the framed dashed knots in the diagram, we obtain a Legendrian $1$-link $K_{\mathcal{H}}$ (in the standard contact $S^3$) which consists of the above (unframed) unknots and (unframed) dashed knots. Note that after forgetting all the framings in the diagram $CSD_{\mathcal{H}}(\partial(\widetilde{X}\times D^2))$, the link $(L,K)$ becomes a relative Legendrian $2$-link in the standard contact pair $(S^5,S^3)$.
\end{remark}

Under the setting and notation given in Remark \ref{rmk:subcritical}, we define

\begin{definition}\label{def:suspendible_link_general}
A relative Legendrian $2$-link $(L,K)$ in the contact boundary $\partial(\widetilde{X}\times D^2)$ of a subcritical Stein $6$-domain $\widetilde{W}=\widetilde{X}\times D^2$ is said to be \emph{suspendible} if there exists a Weinstein handle decomposition $\mathcal{H}$ of $\widetilde{X}$ such that there exists a Legendrian isotopy in the complement $S^5\setminus K_{\mathcal{H}}$ from $(L,K)$ to a suspended Legendrian $2$-link based on some Legendrian $1$-link $K'$ (possibly different than $K$) in the complement $S^3\setminus K_{\mathcal{H}}$.
\end{definition}


\section{Proof of Theorem \ref{thm:Main_theorem}} \label{proof}
 	
We will first explain that any admissible Stein pair can be described by a relative Stein diagram as promised in the introduction. Then we will prove the theorem in three cases/steps: Proposition \ref{mainprop1} (lack of $2$-handles), Proposition \ref{mainprop2} (lack of $3$-handles), and the general case.

 Let $(W^6,X^4)$ be an admissible Stein pair. By definition they admit a relative handle  decomposition such that the link $L$ (along which the $3$-handles of $W$ are attached) and its equatorial link $K$ (along which the corresponding $2$-handles of $X$ are attached) together form a relative Legendrian $2$-link $(L,K)$ in the contact boundary of the Stein $6$-domain
$$W_2=D^4 \cup \{\textrm{$1$-handles of $W$}\} \cup \{\textrm{$2$-handles of $W$}\}$$ which splits as $W_2=\widetilde{X}\times D^2$ where $\widetilde{X}$ is a Stein $4$-domain. By assumption $(L,K)\subset \partial(\widetilde{X}\times D^2)$ is suspendible, therefore, there exists a Weinstein handle decomposition $\mathcal{H}$ of $\widetilde{X}$ such that we can isotope $(L,K)$ (through Legendrian $2$-links in $S^5\setminus K_{\mathcal{H}}$) to some suspended Legendrian $2$-link. Assuming Legendrian isotopy taking $(L,K)$ to a suspended Legendrian $2$-link has been already performed, we may assume that $(L,K)$ is suspended.

In the diagram $RSD_{\mathcal{H}}(\widetilde{X}\times D^2)$, suppose the $2$-handles of $\widetilde{X}$ (and so $\widetilde{W}$, and hence $W$) are attached along an isotropic (Legendrian) $1$-link $M$. The $1$-link $M$ and the suspended $2$-link $(L,K)$ lie in the relative contact pair $(\#_r S^1 \times S^4, \#_r S^1 \times S^2)$ (equipped with $(\eta^5_r,\eta^3_r)$) which is the relative boundary of the relative Stein pair $(\natural_r S^1 \times D^5, \natural_r S^1 \times D^3)$ where $r$ is the number of $1$-handles of $\widetilde{X}$ (and so $\widetilde{W}$, and hence $W$). Thus, by first introducing $r$ dotted unknots and then drawing the components of $M$ (as dashed knots) and the components of $K$ (as solid knots), one can form a relative Stein diagram of $(W,X)$ as depicted in Figure \ref{fig:Dotted_rel_Stein_diagram}.

One should note that the resulting relative Stein diagram describes the admissible Stein pair $(W,X)$ uniquely up to Stein deformation. This is because of the fact that the construction (given in Proposition \ref{prop3.1}) of a suspended Legendrian $2$-link depends only on the base knot, and that the suspended link $(L,K)$ is Legendrian isotopic to the original relative Legendrian $2$-link along which the relative $3$-handles of $W$ are attached. Hence, we have shown that

\begin{lemma} \label{lem:admissible_to_diagram}
Every admissible Stein pair can be described via some relative Stein diagram. Conversely, every relative Stein diagram determines an admissible Stein pair which is unique up to Stein deformation.
\end{lemma}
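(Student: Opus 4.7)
The proof falls naturally into two parts, and most of the forward direction has in fact been assembled in the paragraphs immediately preceding the lemma; the job is to package that construction and then establish the converse with the required uniqueness.

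For the forward direction, starting from an admissible Stein pair $(W,X)$, my plan is to invoke Definition \ref{def:admissible} to split $W=\widetilde{W}\cup (\sqcup H_j)$ with $\widetilde{W}=\widetilde{X}\times D^2$, and to choose a Weinstein handle decomposition $\mathcal{H}$ of $\widetilde{X}$. This produces the relative Stein diagram $RSD_{\mathcal{H}}(\widetilde{X}\times D^2)$ of the subcritical part, in which the $1$-handles of $\widetilde{X}$ (and hence of $\widetilde{W}$) appear as dotted unknots and the $2$-handles of $\widetilde{X}$ (and hence of $\widetilde{W}$) appear as framed dashed Legendrian knots. Because $(L,K)$ is suspendible (by admissibility), I can use Definition \ref{def:suspendible_link_general} to perform a Legendrian isotopy in $S^5\setminus K_{\mathcal{H}}$ carrying $(L,K)$ onto a suspended Legendrian $2$-link. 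Drawing the equatorial $1$-link $K$ as a collection of solid knots inside the diagram completes a relative Stein diagram of $(W,X)$, exactly as illustrated in Figure \ref{fig:Dotted_rel_Stein_diagram}.

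For the converse, given a relative Stein diagram $\mathcal{D}$, my plan is to read off the data in reverse. The dotted unknots and framed dashed knots of $\mathcal{D}$ describe a standard Stein diagram for a Stein $4$-domain $\widetilde{X}$ (in the sense of Gompf's construction), and hence also the subcritical Stein $6$-domain $\widetilde{W}=\widetilde{X}\times D^2$. The solid knots, viewed as a Legendrian $1$-link $K$ in the contact boundary of $\widetilde{W}$, serve as the equatorial link of a relative Legendrian $2$-link $(L,K)$: Proposition \ref{prop3.1} (applied in the complement of the $K_{\mathcal{H}}$-portion of the diagram) produces the Legendrian $2$-spheres $L_j$ embedded as pages of the appropriate Brieskorn fillings, with contact framings matching the page framings. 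Attaching relative $3$-handles along $(L,K)$ yields an admissible Stein pair $(W,X)$ whose relative Stein diagram is $\mathcal{D}$.

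The main obstacle — the only genuinely non-formal point — is the uniqueness assertion. Here my strategy is to reduce everything to two facts already available in the excerpt. First, the suspension construction of Proposition \ref{prop3.1} depends only on the Legendrian isotopy class of the base $1$-link $K$ (both in $S^3$ and in its natural neighborhood inside $S^5$), because the double-branched lift of Lagrangian half-disks is canonical up to symplectic isotopy and the framing identifications of Lemma \ref{lem:contact_page_framing} are canonical. Second, by the very definition of \emph{suspendible}, the suspended $2$-link produced is Legendrian isotopic to the original attaching link $(L,K)$ in the complement of $K_{\mathcal{H}}$, so the relative $3$-handles attached at the two links yield Stein-deformation-equivalent Weinstein structures. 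Different auxiliary choices (e.g.\ a different Weinstein handle decomposition $\mathcal{H}$ of $\widetilde{X}$, or a different suspended representative in its Legendrian isotopy class) therefore produce the same admissible Stein pair up to Stein deformation, completing the proof.
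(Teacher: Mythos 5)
Your proposal follows essentially the same route as the paper: the forward direction and the uniqueness claim are argued exactly as in the paragraphs preceding the lemma (split off the subcritical part, choose a Weinstein handle decomposition $\mathcal{H}$ of $\widetilde{X}$, use suspendibility to replace $(L,K)$ by a suspended Legendrian $2$-link, draw the dotted unknots, dashed knots and solid equators, and note that the suspension construction of Proposition \ref{prop3.1} depends only on the base knot while the suspended link is Legendrian isotopic to the original attaching link). Your somewhat more explicit treatment of the converse simply spells out what the paper leaves implicit, so the two arguments agree in substance.
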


\begin{proposition}\label{mainprop1}
Every admissible Stein pair without relative $2$-handles admits a compatible relative Lefschetz fibration.
\end{proposition}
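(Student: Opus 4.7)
The plan is to build on the compatible relative Lefschetz fibration on $(D^6,D^4)$ with fibers $(V_\epsilon(p,q,2),V_\epsilon(p,q))$ guaranteed by Proposition \ref{prop2.1} and Remark \ref{rmk:Compatible_Rel_Lefs_fib}, stabilize it to accommodate the $1$-handles of the admissible pair, and then introduce Lefschetz critical points corresponding to the relative $3$-handles using the suspended form of their attaching $2$-links. Since $(W,X)$ is admissible with no relative $2$-handles, the subcritical part $\widetilde{X}$ of $X$ carries only $0$- and $1$-handles, so $\widetilde{X}=\natural_r S^1\times D^3$ and $\widetilde{W}=\widetilde{X}\times D^2=\natural_r S^1\times D^5$ for some $r\geq 0$. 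Let $(L,K)=\sqcup_{j=1}^\ell (L_j,K_j)$ be the suspendible relative Legendrian $2$-link in $\partial\widetilde{W}$ along which the relative $3$-handles of $(W,X)$ are attached. By Definition \ref{def:suspendible_link_general}, after fixing a suitable Weinstein decomposition $\mathcal{H}$ of $\widetilde{X}$ (whose diagram $K_{\mathcal{H}}\subset S^3$ consists of $r$ unknots only, since there are no dashed $2$-handle knots), I may assume---after a preliminary Legendrian isotopy performed inside $S^5\setminus K_{\mathcal{H}}$---that $(L,K)$ is already a suspended Legendrian $2$-link based on some Legendrian $1$-link $K'\subset S^3\setminus K_{\mathcal{H}}$.

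The first step is then to apply Proposition \ref{prop3.1} to the multi-component base link $K'$: for $(p,q)$ chosen sufficiently large, one obtains a Seifert filling pair $V_\epsilon(p,q)\subset V_\epsilon(p,q,2)$ such that $K'$ is embedded in $V_\epsilon(p,q)$, the suspension $L$ is embedded in $V_\epsilon(p,q,2)$, and on both levels the page framings coincide with the contact framings. By Proposition \ref{prop2.1} and Remark \ref{rmk:Compatible_Rel_Lefs_fib}, this pair of fillings appears as the regular fibers of a compatible relative Lefschetz fibration $(\widetilde\pi_{p,q},\pi_{p,q})$ on $(D^6,D^4)$ with monodromy $(\widetilde\varphi_{p,q},\varphi_{p,q})$.

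The second step is to attach the $r$ relative $1$-handles. Attaching a subcritical Weinstein $1$-handle to the total space of a compatible Lefschetz fibration modifies each regular fiber by boundary-summing with a trivial factor ($S^1\times D^3$ on the $6$-dimensional side, $S^1\times D^1$ on the $4$-dimensional side) and extends the monodromy by the identity---a direct higher-dimensional analogue of the dimension-four operation used by Akbulut and Ozbagci. Because $K_{\mathcal{H}}$ is precisely the attaching locus of these $1$-handles and the isotoped $(L,K)$ lies entirely in $S^5\setminus K_{\mathcal{H}}$, after the stabilization the link still sits on the enlarged pages, and the page framing still equals the contact framing there. At this intermediate stage we have a compatible relative Lefschetz fibration on the subcritical pair $(\widetilde{W},\widetilde{X})$.

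Finally, I introduce one new Lefschetz critical value for each component $L_j$, whose vanishing cycle on the $4$-dimensional page is the Lagrangian $2$-sphere $L_j$ and whose equatorial vanishing cycle on the $2$-dimensional page is the Lagrangian circle $K_j\subset L_j$; the monodromies are right-composed with $\tau_{L_j}$ and $\tau_{K_j}$, respectively. Since the page framings agree with the contact framings by Lemma \ref{lem:contact_page_framing}, the resulting framing data for these Lefschetz handles are exactly $tb(L_j)-1$ and $tb(K_j)-1$, matching Definition \ref{def:Stein_Pair}(2)--(3); Proposition \ref{prop3.1}(v) then ensures the proper fiber inclusion $F_X\hookrightarrow F_W$, and by construction the critical values of $\pi_W$ form a subset of those of $\pi_X$. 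The step I expect to be the main obstacle is ensuring that a single $(p,q)$ and a single handle decomposition $\mathcal{H}$ simultaneously suspend all $\ell$ components of $(L,K)$ and simultaneously realize all of $K'$ on one Seifert filling; this reduces, however, to the multi-component Plamenevskaya--Lyon statement packaged in Proposition \ref{prop3.1} together with Legendrian isotopies that can be performed sequentially within $S^5\setminus K_{\mathcal{H}}$, so the bookkeeping is manageable.
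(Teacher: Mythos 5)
Your proposal is correct and follows essentially the same route as the paper's proof: suspendibility reduces $(L,K)$ to a suspended link realized on the Seifert filling pair via Proposition \ref{prop3.1}, Proposition \ref{prop2.1} supplies the relative fibration on $(D^6,D^4)$, the $1$-handles are absorbed by puncturing the fibers (your boundary-sum description is the same operation) and extending the monodromy by the identity, and the $3$-handles become relative Lefschetz handles via the framing coincidence of Lemma \ref{lem:contact_page_framing}. The only cosmetic difference is that the paper also realizes the undotted unknots $U_i$ on the Seifert surface and isotopes them near the binding before carving, a detail you elide but which does not change the argument.
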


\begin{proof}
The relative Stein diagram of an admissible Stein pair $(W,X)$ without relative $2$-handles consists of $r$ dotted unknots $U_1, ..., U_r$ (representing relative $1$-handles) and the solid knots drawn for each component of $K$ (the base of the suspended Legendrian $2$-link $(L,K)$ along which the relative $3$-handles are attached) as discussed above. (Here, in particular, it is assumed that this relative Stein diagram of $(W,X)$ is obtained from a diagram $RSD_{\mathcal{H}}(\widetilde{X}\times D^2)$ for a suitable Weinstein handle decomposition $\mathcal{H}$.) First, we forget the dots on $U_i$'s and consider the entire diagram as a relative $2$-link in the relative standard contact pair $(S^5,S^3)$. Then, by Proposition \ref{prop3.1}, there exists a pair of  Brieskorn manifolds $(\Sigma(p,q,2),\Sigma(p,q))$ in $(S^5,S^3)$ such that $(L,K)$ is properly embedded on their Seifert fillings $(V_\epsilon(p,q,2),V_\epsilon(p,q))$ as Legendrian. Note that each $U_i$ is also realized on $V_\epsilon(p,q)\subset V_\epsilon(p,q,2)$. Moreover, by Proposition \ref{prop2.1}, there exist Lefschetz fibrations on $D^6$ and $D^4$ with regular fibers $F_{D^6}=V_\epsilon(p,q,2)$ and $F_{D^4}=V_\epsilon(p,q)$, respectively, as depicted in Figure \ref{fig:relative_fibers}-(a). Recall (from Remark \ref{rmk:Compatible_Rel_Lefs_fib}) that the monodromies of these fibrations are denoted by $\widetilde{\varphi}_{p,q}$ and $\varphi_{p,q}$, respectively, which are the product of $(p-1)(q-1)$ many right-handed Dehn twists along explicit spheres.\\

Next, generalizing the idea used for $D^4$ in \cite{ao} to $D^6$, we will extend the above compatible relative Lefschetz fibration on $(D^6,D^4)$ to a compatible relative Lefschetz fibration on the relative Stein pair $(D^6\cup 1$-handles, $D^4\cup 1$-handles) as follows:\\
	
	First, isotope each $U_i$ near the binding $\Sigma(p,q)$ of the induced open book on the boundary $S^3=\partial D^4$ as in \cite{ao} so that it becomes transverse to each page $F_{D^4}$. Once this is achieved, $U_i$ will also transversally intersect (near the binding $\Sigma(p,q,2)$) each page $F_{D^6}$ of the induced open book on the boundary $S^5=\partial D^6$ (thanks to the relative Lefschetz fibration structure). For each $i=1,..., r$, attaching a $1$-handle to $D^4$ corresponds to first pushing the interior of the Lagrangian spanning disk of $U_i$ into $D^4$ and then removing a tubular neighborhood of this resulting disk. This is equivalent to puncture each fiber $F_{D^4}$. Since each fiber $F_{D^4}$ is properly embedded in the corresponding fiber $F_{D^6}$, this will correspond to carving a small neatly embedded  Lagrangian  $2$-disk in the fiber $F_{D^6}$. On the other hand, by the condition (1) in Definition \ref{def:Stein_Pair}, the cocore of each $1$-handle of $X$ is properly embedded in the cocore of the corresponding $1$-handle of $W$. Therefore, simultaneously carving these small neatly embedded Lagrangian $2$-disks from the fibers $F_{D^6}$ corresponds to attaching the corresponding $6$-dimensional relative $1$-handle of $W$ to $D^6$. In other words, attaching each relative $1$-handle of $(W,X)$ to the Stein pair $(D^6,D^4)$ is equivalent to carving a small Lagrangian disk pair $(D^2,D^0)$ from each fiber pair $(F_{D^6},F_{D^4})=(V_\epsilon(p,q,2),V_\epsilon(p,q))$. This is depicted in Figure \ref{fig:relative_fibers}-(b) where a single relative $1$-handle is considered (for simplicity), and the ``punctured'' fibers (i.e., new fibers after carving disks) are denoted by $\mathring{F}_{D^6}$ and $\mathring{F}_{D^4}$.\\
	
	After attaching all relative $1$-handles of $(W,X)$ to $(D^6,D^4)$, we get a new compatible relative Lefschetz fibration on $(D^6\cup 1$-handles, $D^4\cup 1$-handles)$=(\natural_r S^1 \times D^5, \natural_r S^1 \times D^3)$ whose regular fibers are  $r$ times ``punctured'' $(V_\epsilon(p,q,2),V_\epsilon(p,q))$ near boundaries, which we denote by $(\mathring{F}_{D^6},\mathring{F}_{D^4})=(\mathring{V}_\epsilon(p,q,2),\mathring{V}_\epsilon(p,q)),$ and whose monodromy is obtained by extending $\widetilde{\varphi}_{p,q}$ and $\varphi_{p,q}$ (over the $1$-handles) by identity.\\
	
	Note that the suspended Legendrian $2$-link $(L,K)$ still sits on a relative page of the compatible relative open book on the boundary contact pair $(\#_r S^1 \times S^4, \#_r S^1 \times S^2)$ so that the framing conditions are still satisfied for the components of both $L$ and $K$ (as stated in Proposition \ref{prop3.1}). 
	
\begin{figure}[H]
			\centering
			\includegraphics[scale=1.3]{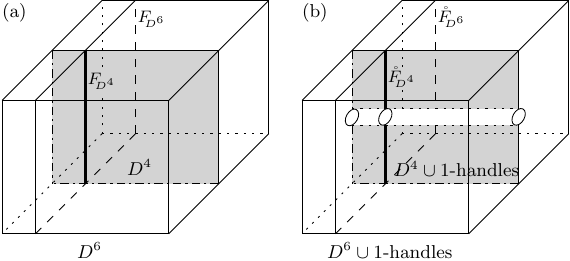}
			\vspace{.3cm}
			\caption{(a) Relative fibers $(F_{D^6},F_{D^4})$ of the Stein Pair $(D^6,D^4)$, (b) Punctured relative fibers $(\mathring{F}_{D^6},\mathring{F}_{D^4})$ of the Stein Pair $(D^6\cup 1$-handles, $D^4\cup 1$-handles).}
			\label{fig:relative_fibers}
\end{figure}

Now assume that $(L,K)$ has $\ell$ components, say $(L_j,K_j)$, $j=1,..., \ell$. So, for each $j$, we attach a relative  $3$-handle $({H}_j,{h}_j)$ to $(\natural_r S^1 \times D^5, \natural_r S^1 \times D^3)$ along the relative Legendrian $2$-knot $(L_j,K_j)$ which is properly embedded in $(\mathring{F}_{D^6},\mathring{F}_{D^4})$. Each ${H}_j$ (resp. ${h}_j$) is attached along a Legendrian $L_j$ (resp. $K_j$) which lies on a Weinstein (so symplectic) page $\mathring{F}_{D^6}$ (resp. $\mathring{F}_{D^4}$) of the boundary (compatible) open book so that the contact framing of $L_j$ (resp. $K_j$) coincides with its page framing on $\mathring{F}_{D^6}$ (resp. on $\mathring{F}_{D^4}$). Furthermore, recall that any Legendrian submanifold sitting on a page of a compatible open book is Lagrangian. Thus, we conclude that ${H}_j$ (resp. ${h}_j$) is a six (resp. four) dimensional Lefschetz handle. Consequently, attaching each relative $3$-handle corresponds to attaching a ``\textit{relative}'' Lefschetz handle. So, attaching all such handles constructs a compatible relative Lefschetz fibration, say $(\pi_W,\pi_X)$, on  the admissible Stein pair $(W,X)$. Note that the fibers of $(\pi_W,\pi_X)$ are still $(\mathring{F}_{D^6},\mathring{F}_{D^4})$. On the other hand, the monodromy of $(\pi_W,\pi_X)$ is the pair $$(\varphi_W,\varphi_X)=(\widetilde{\varphi}_{p,q}\cdot\tau_{L_1}\tau_{L_2}\cdots\tau_{L_\ell},\;\;\varphi_{p,q}\cdot\tau_{K_1}\tau_{K_2}\cdots\tau_{K_\ell})$$ where $\tau_{L_j}$ (resp, $\tau_{K_j}$) is the right-handed Dehn twist along $L_j$ (resp. $K_j$).

\end{proof}

\begin{proposition}\label{mainprop2}
	 On every subcritical Stein $ 6 $-domain (no handles of critical index) there exists a relative Stein pair structure which admits a nontrivial compatible relative Lefschetz fibration with regular fibers  $\mathring{V}_\epsilon(p,q,2)\cup\bigcup\limits_{j=1}^{2k} \tilde{h_j}$ for some $p,q$, where each $\tilde{h_j}$ is a $4$-dimensional $2$-handle.
\end{proposition}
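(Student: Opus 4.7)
The plan is to reduce to Akbulut--Ozbagci's four-dimensional construction via Cieliebak's subcritical splitting theorem, and then to suspend using the recipe of Proposition~\ref{prop2.1}; the factor of two in the fiber-handle count drops out automatically from the suspension, since each critical value of a four-dimensional Lefschetz fibration contributes two critical values to the inner Lefschetz fibration on the suspended fiber. First I would invoke Cieliebak's theorem to obtain, up to Stein deformation, a product splitting $\widetilde{W}\cong\widetilde{X}\times D^2$ with $\widetilde{X}$ a Stein $4$-domain. Equipping $\widetilde{W}$ with the product Weinstein structure exhibits $(\widetilde{W},\widetilde{X})$ as a relative Stein pair in the sense of Definition~\ref{def:Stein_Pair}: each Weinstein handle of $\widetilde{W}$ is the thickening $h\times D^2$ of a handle $h$ of $\widetilde{X}$ of the same index, so cocores nest properly and conditions $(0)$--$(2)$ hold, while condition $(3)$ is vacuous since $\widetilde{W}$ carries no $3$-handles.

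Next I fix a Weinstein decomposition of $\widetilde{X}$ as $D^4$ with $r$ $1$-handles and $k$ $2$-handles, the latter attached along Legendrians $K_1,\ldots,K_k$ with Stein framings $tb(K_j)-1$, and I run Akbulut--Ozbagci together with Lyon--Plamenevskaya to place the entire link onto a Seifert surface $V_\epsilon(p,q)$ of a torus link, for suitable $(p,q)$. This yields a compatible Lefschetz fibration $\pi_X:\widetilde{X}\to D^2$ whose regular fiber is $\mathring{V}_\epsilon(p,q)$ and whose critical values consist of the $(p-1)(q-1)$ Brieskorn values together with one critical value per $K_j$. I then define the suspension $\widetilde{\pi}_W:\widetilde{W}\to D^2$ by $\widetilde{\pi}_W(x,z)=\pi_X(x)+z^2$, exactly as in Proposition~\ref{prop2.1}. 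Since $d\widetilde{\pi}_W=d\pi_X+2z\,dz$ vanishes precisely at the points $(r_j,0)$ for $r_j\in\mathrm{Crit}(\pi_X)$, each nondegenerate by the standard suspension computation, $\widetilde{\pi}_W$ is a genuine compatible Lefschetz fibration on $\widetilde{W}$ sharing critical values with $\pi_X$. The inclusion $\widetilde{X}=\widetilde{X}\times\{0\}\hookrightarrow\widetilde{W}$ is symplectic and pulls $\widetilde{\pi}_W$ back to $\pi_X$, embedding each regular fiber of $\pi_X$ properly into that of $\widetilde{\pi}_W$, so $(\widetilde{\pi}_W,\pi_X)$ is a compatible relative Lefschetz fibration.

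The substantive and technically delicate step is identifying the regular fiber $\widetilde{F}_W=\{(x,z):\pi_X(x)+z^2=w\}$. Its inner projection $\Pi(x,z)=z$ is itself a Lefschetz fibration on $\widetilde{F}_W$ with fiber $F_X$, and a short calculation in the local model $\pi_X=z_1^2+z_2^2$ shows that $\Pi$ has exactly $2\bigl((p-1)(q-1)+k\bigr)$ nondegenerate critical points, two of them (at $(r_j,\pm\sqrt{w-s_j})$) for each critical value $s_j$ of $\pi_X$. By Proposition~\ref{prop2.1}, restricting to the $D^4$-and-$1$-handles subfibration of $\pi_X$ yields, via $\Pi$, exactly $\mathring{V}_\epsilon(p,q,2)$ (the $1$-handles of $\widetilde{X}$ carve Lagrangian $2$-disks from $V_\epsilon(p,q,2)$). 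Each of the remaining $k$ Stein $2$-handles of $\widetilde{X}$ then contributes two further Lefschetz $2$-handles $\tilde h_{2j-1},\tilde h_{2j}$ attached to $\mathring{V}_\epsilon(p,q,2)$ along Legendrian circles on its boundary, with framing equal to page framing by Lemma~\ref{lem:contact_page_framing} applied inside $\widetilde{F}_W$. The main obstacle is the clean bookkeeping here: verifying that the $2(p-1)(q-1)$ Brieskorn Lefschetz handles of $\Pi$ assemble into $\mathring{V}_\epsilon(p,q,2)$ compatibly with the $1$-handle carvings, while the $2k$ new ones attach cleanly along disjoint Legendrians on $\partial\mathring{V}_\epsilon(p,q,2)$ with the correct Stein framing. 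Disjointness of the critical-value pair contributed by each $K_j$ from the Brieskorn data can be arranged by a small generic perturbation of $M_\delta$; the framing match is forced by Lemma~\ref{lem:contact_page_framing}; and nontriviality is immediate since $(p-1)(q-1)\geq 1$.
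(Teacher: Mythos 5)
Your proposal is correct and follows essentially the same route as the paper: Cieliebak's splitting $\widetilde{W}\cong\widetilde{X}\times D^2$, the Akbulut--Ozbagci fibration on $\widetilde{X}$, the $z^2$-suspension, and verification of the relative Stein pair conditions. The only cosmetic difference is that you count the $2k$ fiber $2$-handles via the inner fibration $\Pi(x,z)=z$ on the suspended fiber, whereas the paper phrases it via the double branched cover of $\widetilde{X}$ over $\pi_{\mathcal{H}}^{-1}(c)$ --- two equivalent viewpoints both already used in the proofs of Propositions \ref{prop2.1} and \ref{prop3.1}.
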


\begin{proof}
Let $\widetilde{W}$ be a subcritical Stein $6$-domain. By the result of  Cieliebak \cite{cieliebak}, there is a splitting $\widetilde{W}=\widetilde{X}\times D^2$ where $\widetilde{X}$ is a Stein $4$-domain. Starting from a Weinstein handle decomposition $\mathcal{H}$ of $\widetilde{X}$, one obtains a compatible Lefschetz fibration on $\widetilde{X}$ with fibers $r$ times ``punctured'' $V_\epsilon (p,q)$ for some $p,q$ (where $r$ is the number of $1$-handles of $\widetilde{X}$ in $\mathcal{H}$), and whose monodromy is that of the torus link $\Sigma(p,q)$ composed with right-handed Dehn twists corresponding to the Stein (Lefschetz) $2$-handles of $\widetilde{X}$ in $\mathcal{H}$, as constructed in \cite{ao}. So, we have a (Lefschetz) fibration map $\pi_\mathcal{H}: \widetilde{X}\rightarrow D^2$ which has finitely many nondegenerate critical points.\\

We can extend $\pi_\mathcal{H}$ to a fibration map on $\widetilde{X}\times D^2$ by using the  ``\textit{suspension}'' technique, as in Proposition \ref{prop2.1}. More precisely, consider the following map

\begin{center}
	$\widetilde{\pi}_\mathcal{H}:\widetilde{X}\times D^2 \to  D^2$, \quad
	$ (p,z_2)\mapsto \pi_\mathcal{H}(p)+z_2^2 $.
\end{center}

Critical points of $\widetilde{\pi}_\mathcal{H}$ are given by the local equations
$${\frac{\partial \widetilde{\pi}_\mathcal{H}}{\partial z_0}  }={\frac{\partial {\pi}_\mathcal{H}} {\partial z_0} }=0, \quad {\frac{\partial \widetilde{\pi}_\mathcal{H}}{\partial z_1}}={\frac{\partial {\pi}_\mathcal{H} }{\partial z_1}}=0, \quad {\frac{\partial \widetilde{\pi}_\mathcal{H}}{\partial z_2} }=0$$
where $(z_0,z_1)$ are the local complex coordinates around a critical point in $\widetilde{X}$. So, the critical points of $\widetilde{\pi}_\mathcal{H}$ are the same as those of $\pi_\mathcal{H}$ with the additional zero $z_2$-coordinate. Also, if we check the Hessian of $\widetilde{\pi}_\mathcal{H}$ at each critical point, we see that it is nondegenerate because the corresponding Hessian of $\pi_\mathcal{H}$ is nondegenerate. Each regular fiber of $\widetilde{\pi}_\mathcal{H}$ can be seen as double branched cover of $\widetilde{X}$ which is branched over ${\pi}_\mathcal{H}^{-1}(c)$ where $c$ is a regular value of $\widetilde{\pi}_\mathcal{H}$. With this point of view, the effect of attaching $k$-many $4$-dimensional $2$-handles to the fibers of $\widetilde{\pi}_\mathcal{H}$ will result in attaching $2k$-many $2$-handles to $r$-times punctured $V_\epsilon(p,q,2)$. The monodromy of $\widetilde{\pi}_\mathcal{H}$ is the product of right-handed Dehn twists along the spheres obtained by taking the suspensions of the corresponding vanishing cycles of $\pi_\mathcal{H}$. Therefore, $\widetilde{\pi}_\mathcal{H}$ is a compatible Lefschetz fibration map on $\widetilde{W}$ with nontrivial monodromy.\\

Moreover, the pair $(\widetilde{W},\widetilde{X})$ is a relative Stein pair since it fulfills the conditions of Definition \ref{def:Stein_Pair}. To see this, observe that Stein domains $\widetilde{W}$ and $\widetilde{X}$ both share the same handle decomposition. Indeed, the cocores of index $1$- and $2$-handles of $\widetilde{X}$ are properly embedded in those of the corresponding index $1$ and $2$-handles of $\widetilde{W}$ (as the latter are just the thickenings of the former). Note that, the condition $(3)$ is obviously satisfied since there are no index $3$-handles. Hence, we conclude that the pair $(\widetilde{\pi}_\mathcal{H},\pi_\mathcal{H})$ is a compatible relative Lefschetz fibration on the relative (admissible indeed) subcritical Stein pair $(\widetilde{W},\widetilde{X})$.

\end{proof}

Finally, we discuss the most general case as it has been stated in Theorem \ref{thm:Main_theorem}.

\begin{proof}[Proof of Theorem \ref{thm:Main_theorem}]
Let $(W,X)$ be any admissible Stein pair. Then we have
\begin{center}
	$W=\widetilde{W}\cup (\sqcup_{j=1}^{\ell} H_j)$ \quad and \quad $X=\widetilde{X}\cup (\sqcup_{j=1}^{\ell} h_j)$
\end{center}
where $\widetilde{W}$ is the subcritical part of $W$ which split as $\widetilde{W}=\widetilde{X}\times D^2$ for some Stein $4$-domain $\widetilde{X}$, and $(H_j,h_j)$ are the relative $3$-handles of $W$ attached along a relative Legendrian $2$-link $(L,K)$. By Proposition \ref{mainprop2}, there exists a compatible relative Lefschetz fibration $(\widetilde{\pi}_\mathcal{H},\pi_\mathcal{H})$ on the subcritical part $(\widetilde{W},\widetilde{X})$ of $(W,X)$ for any Weinstein handle decomposition $\mathcal{H}$ of $\widetilde{X}$. We choose $\mathcal{H}$ in such a way that the link $(L,K)$ is Legendrian isotopic (in $S^5 \setminus K_{\mathcal{H}}$) to a suspended Legendrian $2$-link. Such a decomposition $\mathcal{H}$ exists by admissibility (i.e., by suspendibility of $(L,K)$, see Definition \ref{def:suspendible_link_general}). Hence, assuming the above Legendrian isotopy has been already performed, we may assume $(L,K)$ is a suspended Legendrian $2$-link sitting (as Lagrangian) on a relative page $(\widetilde{F}_{\mathcal{H}},F_{\mathcal{H}})$ of the relative compatible open book (induced by $(\widetilde{\pi}_\mathcal{H},\pi_\mathcal{H})$) on the boundary relative contact pair $(\partial \widetilde{W},\partial \widetilde{X})$. Note that by Lemma \ref{lem:contact_page_framing} the contact and the page framing of each component $(L_j,K_j)$ of $(L,K)$ coincide.

To finish the proof, one just needs to proceed as in the last part of the proof of Proposition \ref{mainprop1}. More precisely, we have observed that each relative $3$-handle $(H_j,h_j)$  attached along $(L_j,K_j)$ is a relative Lefschetz handle, i.e.,  $(\widetilde{\pi}_\mathcal{H},\pi_\mathcal{H})$ extends over each $(H_j,h_j)$. Thus, attaching all $(H_j,h_j)$'s, we get a compatible relative Lefschetz fibration, say $(\pi_W,\pi_X)$, on  the admissible Stein pair $(W,X)$ with relative regular fibers $(\widetilde{F}_{\mathcal{H}},F_{\mathcal{H}})$. If $(\widetilde{\varphi}_{\mathcal{H}},\varphi_{\mathcal{H}})$ denotes the monodromy of $(\widetilde{\pi}_\mathcal{H},\pi_\mathcal{H})$, then the monodromy of $(\pi_W,\pi_X)$ is the pair $$(\varphi_W,\varphi_X)=(\widetilde{\varphi}_\mathcal{H}\cdot\tau_{L_1}\tau_{L_2}\cdots\tau_{L_\ell},\;\;\varphi_\mathcal{H}\cdot\tau_{K_1}\tau_{K_2}\cdots\tau_{K_\ell})$$ where $\tau_{L_j}$ (resp, $\tau_{K_j}$) denotes the right-handed Dehn twist along $L_j$ (resp. $K_j$) as before.

\end{proof}
 		
	
\section{Examples}
\begin{example}
Let $D(T^\ast S^n)$ denote the standard closed unit disk bundle over the $n$-sphere. The pair $(D(T^\ast S^3),D(T^\ast S^2))\subset (\mathbb{C}^4,\mathbb{C}^3)$ equipped with their \textit{standard} complex structures (inherited from complex spaces) is an admissible relative Stein pair: It can be obtained by attaching a relative  $3$-handle to the trivial Stein pair $(D^6,D^4)$ along a relative Legendrian $2$-unknot $(L,K)\subset (S^5,S^3)$ embedded in the simplest possible way. So, it admits a relative Weinstein handle decomposition with no relative $1$- and $2$-handles, and clearly we may assume $(L,K)$ to be suspended. Therefore, following the steps in Proposition \ref{mainprop1} (we take $p=q=2$), we construct a compatible relative Lefschetz fibration on the admissible relative Stein pair $(D(T^\ast S^3),D(T^\ast S^2))$ with a relative (regular) fiber $(V_1(2,2,2), V_1(2,2))\cong (D(T^\ast S^2),D(T^\ast S^1))$ and the monodromy
$$(\widetilde{\varphi}_{2,2},\varphi_{2,2}) =(\tau_{S^2},\tau_{S^1})$$
where the right-handed Dehn twists are along the zero sections $S^2$ and $S^1$ on the fibers $D(T^\ast S^2)$ and $D(T^\ast S^1 )$, respectively. The relative Stein diagram of $(D(T^\ast S^3),D(T^\ast S^2))$ used in the above construction is given in Figure \ref{fig:cotangent_bundles}.
		
\begin{figure}[h!]
			\centering
			\includegraphics[scale=0.5]{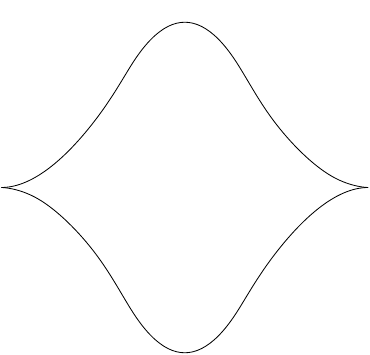}
			\caption{A relative Stein diagram for the standard $(D(T^\ast S^3),D(T^\ast S^2))$.}
			\label{fig:cotangent_bundles}
\end{figure}	
		
Indeed, one can generalize the above construction: If we consider $k$-many linear plumbings of the pair $(D(T^\ast S^3),D(T^\ast S^2))$, we get the pair $(A_k^6,A_k^4)\cong (V_1(k+1,2,2), V_1(k+1,2))$, which is also an admissible relative Stein pair (with their \textit{standard} complex structures inherited from complex spaces in which they live). It admits a relative Stein diagram as in Figure \ref{fig:linear_plumbing} where a relative $3$-handle is attached along the suspended $2$-unknot $(L_i,K_i)$ for each $i=1,..., k$. (No relative $1$- and $2$-handles again.) Then one easily construct a compatible relative Lefschetz fibration on the admissible relative Stein pair $(A_k^6,A_k^4)$ with the (relative) fiber $(V_1(2k,2k,2), V_1(2k,2k))$ and the monodromy $(\widetilde{\varphi}_{2k,2k}\cdot \tau_{L_1}\cdots \tau_{L_k} ,\varphi_{2k,2k}\cdot \tau_{K_1}\cdots \tau_{K_k})$ where $\tau_{L_i},\tau_{K_i}$ are the right-handed Dehn twists along the zero sections of the pair $(D(T^\ast S^3),D(T^\ast S^2))$ plumbed in the $i$th place.
\end{example}		
		
\begin{figure}[h!]
			\centering
			\includegraphics[scale=0.5]{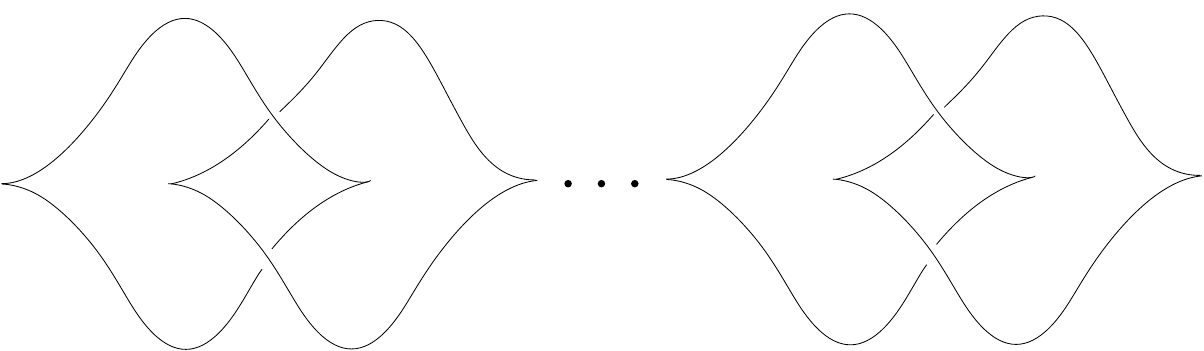}
			\caption{A relative Stein diagram for the standard $(A_k^6,A_k^4)$.}
			\label{fig:linear_plumbing}
\end{figure}

\begin{example}
Consider the admissible relative Stein pair $(W,X)$ determined uniquely (up to Stein deformation) by the relative Stein diagram in Figure \ref{fig:fishtail fiber}-(a). So the six-dimensional Stein domain $W$ consists of (Stein) handles of each index (up to 3): One $0$-handle, two $1$-handles, one $2$-handle (attached along the isotropic knot $K$ drawn dashed) and one $3$-handle (attached along the suspended (relative) Legendrian $2$-knot $\widetilde{L}$ based on $\widetilde{K}$ drawn (bold) solid). The four-dimensional Stein subdomain $X$ is described by the Stein diagram in Figure \ref{fig:fishtail fiber}-(b) which is, indeed, the closed neighborhood of a fishtail fiber in an elliptic fibration (given in Figure \ref{fig:fishtail fiber_2}). The subcritical part of $(W,X)$ is the admissible Stein pair $(\widetilde{W},\widetilde{X})$ which splits as $(\widetilde{X}\times D^2, \widetilde{X})$ where $\widetilde{X}$ is the $D^2$-bundle over $T^2$ with Euler number 0 (i.e., $\widetilde{X}\cong D^2 \times T^2$, so $\widetilde{W}\cong D^4 \times T^2$). For a suitable choice of $(p,q)$, we can construct (as in the proof of Proposition \ref{mainprop2}) a compatible relative Lefschetz fibration on the subcritical part $(\widetilde{W},\widetilde{X})$ with fibers twice ``punctured'' Brieskorn varieties $(V_\epsilon(p,q,2),V_\epsilon(p,q))$ (containing both $(\widetilde{L},\widetilde{K}$) and $(L,K)$ where $L$ is the suspension of $K$) and the monodromy $$(\widetilde{\varphi}_{p,q}\cdot \tau_{L},\varphi_{p,q }\cdot \tau_{K})$$
where $\tau_{K}$, $\tau_{L}$ are the right-handed Dehn twists along $K$ and $L$, respectively.\\

\begin{figure}[h!]
	\centering
	\includegraphics[scale=0.8]{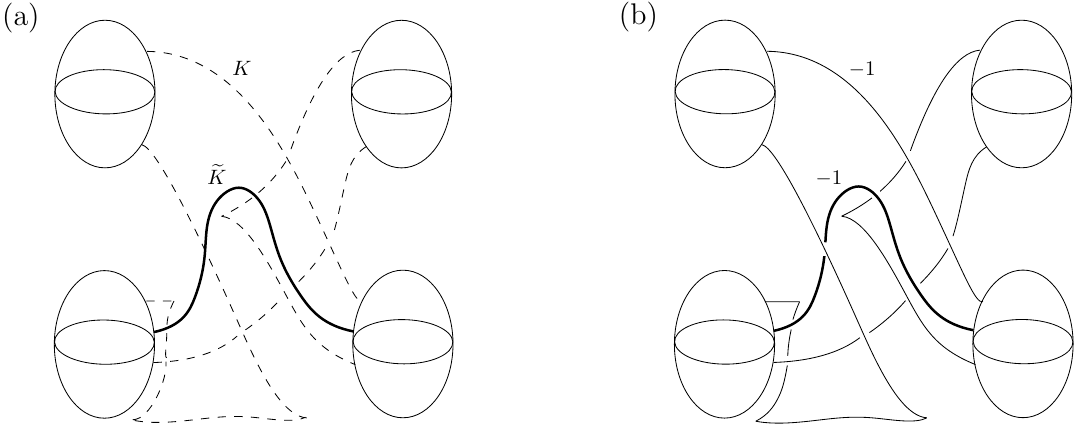}
	\caption{(a) A relative Stein diagram for $(W,X)$, (b) The Stein $4$-subdomain $X$ (the closed neighborhood of a fishtail fiber in an elliptic fibration).}
	\label{fig:fishtail fiber}
\end{figure}

The admissible relative Stein pair $(W,X)$ is obtained by attaching relative $3$-handle to $(\widetilde{W},\widetilde{X})$ along $(\widetilde{L},\widetilde{K})$ over which the compatible relative Lefschetz fibration structure on $(\widetilde{W},\widetilde{X})$ extends. More precisely, $(W,X)$ also admits a compatible relative Lefschetz fibration with the fibers twice ``punctured'' Brieskorn varieties $(V_\epsilon(p,q,2),V_\epsilon(p,q))$ and with the monodromy
$$(\widetilde{\varphi}_{p,q}\cdot \tau_{L}\cdot\tau_{\widetilde{L}},\varphi_{p,q }\cdot \tau_{K}\cdot\tau_{\widetilde{K}})$$ where $\tau_{\widetilde{K}}$, $\tau_{\widetilde{L}}$ are the right-handed Dehn twists along $\widetilde{K}$ and $\widetilde{L}$, respectively.
\end{example}

\newpage
\begin{figure}[h!]
	\centering
	\includegraphics[scale=0.8]{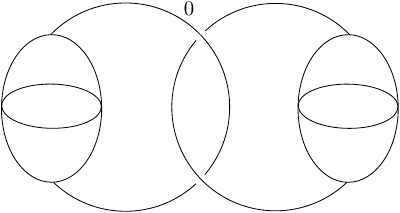}
	\caption{The closed neighborhood of a fishtail fiber in an elliptic fibration.}
	\label{fig:fishtail fiber_2}
\end{figure}

\medskip \noindent {\em Acknowledgments.\/} The authors would like to
thank Georgios Dimitroglou Rizell for helpful conversations and  comments on the first version.

	
	
	
	
\end{document}